\let\footnote=\endnote
\newcommand*\diff{\mathop{}\!\mathrm{d}}
\def \E {\mathbb{E}}
\def \P {\mathbb{P}}
\def \N {\mathbb{N}}
\def \R {\mathbb{R}}
\def \-> {\to}
\def \B {\mathcal{B}}
\def \xb {\mathbf{x}}
\def \var {\operatorname{Var}}
\def \argmin {\operatorname{argmin}}
\def \Var {\mbox{\bf Var}}
\def \pil {\mathrm{P}}
\def \cop {\mathrm{C}}
\def \myo {\mathrm{M}}
\def \base {\mathrm{B}}
\def \pol {\mathrm{A}}
\def \cbs {\mathrm{R}}
\def \G {\mathcal{G}}
\def \L {\mathcal{L}}
\begin{document}


\RUNAUTHOR{Van Jaarsveld, Arts}

\RUNTITLE{Projected Inventory Level Policies for Lost Sales Inventory Systems}

\TITLE{Projected Inventory Level Policies for Lost Sales Inventory Systems: Asymptotic Optimality in Two Regimes}

\ARTICLEAUTHORS{%
\AUTHOR{Willem van Jaarsveld}
\AFF{School of Industrial Engineering, Eindhoven University of Technology, Eindhoven, the Netherlands, PO BOX 513, 5600MB, \EMAIL{w.l.v.jaarsveld@tue.nl}}

\AUTHOR{Joachim Arts}
\AFF{Luxembourg Centre for Logistics and Supply Chain Management, Department of Economics and Management, University of Luxembourg, Luxembourg City, Luxembourg,  6, rue Richard Coudenhove-Kalergi L-1359 \EMAIL{joachim.arts@uni.lu}} 

}

\ABSTRACT{%
We consider the canonical periodic review lost sales inventory system with positive lead-times and stochastic i.i.d. demand under the average cost criterion.
We introduce a new policy that places orders such that the expected inventory level at the time of
arrival of an order is at a fixed level and call it the Projected Inventory Level (PIL) policy.
We prove that this policy has a cost-rate superior to the equivalent system where excess demand is back-ordered instead of lost and
is therefore asymptotically optimal as the cost of losing a sale approaches infinity under mild distributional assumptions.
We further show that this policy dominates the constant order policy
for any finite lead-time and is therefore asymptotically
optimal as the lead-time approaches infinity for the case of exponentially distributed demand per period.
Numerical results show this policy also performs superior relative to other policies.
}%


\KEYWORDS{Lost Sales, Inventory, Optimal policy, Asymptotic Optimality, Markov Decision Process}


\maketitle

%


\section{Introduction}
\label{sec:Introduction}

The periodic review inventory system with lost sales, positive lead time and i.i.d. demand
is a canonical problem in inventory theory. The decision maker is interested in the
average cost-rate of this system. The optimal policy for such
a system can be computed in principle by stochastic dynamic programming, but
it is not practical due to the curse of dimensionality. 
Research has therefore focused on devising
heuristic policies for the lost sales inventory system. Although many variants of lost sales inventory systems exist, results for the canonical system are 
important as they serve as building blocks to design good policies for more
intricate lost sales inventory systems. \cite{BijvankVis2011} review the literature on many such more intricate inventory systems with lost sales.

There are two simple heuristic policies for the canonical lost sales system
that appeal to both practitioners and academics. These policies are the base-stock policy and the constant order policy. 

The base-stock policy places an order each period such that the inventory
position (inventory on-hand + outstanding orders) is raised
to a fixed {\em base-stock level}. This policy is prevalent in practice
due to its intuitive structure and because it is the optimal policy
when excess demand is not lost but back-ordered. The most important merit 
of the base-stock policy is that it is asymptotically optimal as the cost 
of a lost sale approaches infinity under mild conditions on the demand
distribution \citep{Huhetal2009MS}. This asymptotic optimality is robust in 
the sense that it holds for a broad class of heuristics to compute 
base-stock levels \citep{Bijvanketal2015}.

The constant order policy orders the same amount in each period regardless 
of the state of the system. Although this may seem naive at first, this 
policy is asymptotically optimal as the lead-time approaches infinity \citep{Goldbergetal2016},
and can outperform the base-stock policy for long lead-times and moderate 
costs for a lost sale. \cite{xin2016optimality} show that the constant order policy converges to
optimality exponentially fast in the lead time. 

The asymptotic optimality results of \cite{Huhetal2009MS} and
\cite{Goldbergetal2016} are both elegant and useful for
practice. We believe that these results should also inform the design of plausible heuristic
policies: With the knowledge that such asymptotic optimality results are 
attained by relatively simple policies, new
heuristics for the lost sales system should be designed
to be asymptotically optimal for long lead times
{\em and} large lost sales penalties. Unfortunately, the constant
order policy is not asymptotically optimal for large lost sales
penalties and the base-stock policy is not asymptotically
optimal for long lead times. This paper introduces a single parameter policy that is asymptotically optimal for large lost sales penalties 
under mild assumptions on the demand distribution and also for
long lead times when demand has an exponential distribution.
We call this policy the {\em projected inventory level} (PIL)
policy.

The PIL policy places 
orders such that the expected inventory level at the time of
arrival of an order is raised to a fixed level which
we call the projected inventory level. The PIL policy is intuitive for academics and practitioners alike. In fact, the base-stock policy for the canonical inventory 
system where excess demand is back-ordered, rather than lost, is
also a projected inventory level policy: Although the usual
interpretation of a base-stock policy in a system with
back-orders is that it raises the inventory position to a fixed
level, it is equivalent to say that it raises the expected
inventory level at the time of order arrival to a fixed level.
(These two policies are not equivalent in the lost sales inventory system.)
We exploit this equivalence and use it to compare the canonical inventory
systems with lost sales and back-orders respectively when all parameters are
identical. We prove that the cost-rate for the canonical lost sales system is lower
than the cost-rate for the canonical back-order system under the same
projected inventory level. As a corollary to this, we 
find that optimal cost-rate of the canonical lost
sales system is lower than the optimal cost-rate for the equivalent
canonical back-order system. This means we recover a main result of  \cite{janakiramanetal2007} via a new and different proof. The stochastic comparison technique used by \cite{janakiramanetal2007} holds for general convex per period cost functions but does not construct a policy. Our result is \emph{constructive} in the sense that we identify a specific policy (the PIL policy) for which the costs in the lost sales system are lower than the optimal costs for the back-order system. Our construction requires that the cost per period is linear in on-hand inventory and lost sales, which is the most commonly used per period cost function.

The PIL policy also mimics the behavior of the constant order policy for long lead-times. We make this notion rigorous when demand per period has an exponential distribution. In that case, we show that the projected inventory level policy can be interpreted as a one-step policy improvement on the (bias function of) the constant order policy; we believe this to be an interesting proof technique. Under the same assumption, we show that the projected inventory level policy dominates the constant order policy
for any finite lead-time $\tau$. The PIL policy therefore inherits the property of the constant order policy that the gap with the optimal policy decreases exponentially with the lead-time cf. \cite{xin2016optimality}.

Note that the projected inventory level policy has a single
parameter and yet uses all the information in the state vector without aggregating it into the inventory position. This is a feature shared by the Myopic policy where orders are placed to minimize the expected cost in the period that the order arrives. These policies all require a projection of the inventory level at the time of order arrival.
The myopic and PIL policy can therefore both be considered as ``projection'' policies.
Empirically projection policies perform exceptionally well (see \cite{Zipkin2008numeric} and Section \ref{sec:NumericalResults}), but there is no theoretical underpinning that explains why such policies perform so well empirically. In particular, there are no known asymptotic optimality results for such policies. This paper contributes asymptotic optimality results for the projected inventory level policy in two asymptotic regimes.

Policies that are asymptotically optimal for high lost sales costs and long lead times can be constructed by using multiple
parameters to make the policy behave as either a base-stock policy or a constant order policy when needed. For example, a policy may order a convex combination of the base-stock and constant order policy decision, or cap the order placed by a base-stock policy \cite{johansen2008,xin2019}. The projected inventory level policy is different in that the parameter can not be set such that it trivially reduces to either a base-stock or constant order policy. Asymptotic optimality proofs therefore rely on new ideas.

In summary, this paper makes the following contributions:
\begin{enumerate}
    \item We introduce the projected inventory level policy and show that it is a natural generalization of the base-stock policy to systems where sales are lost rather than back-ordered. Furthermore, it is a single parameter policy that utilizes all state information without aggregating it into the inventory position, i.e., it is a projection policy.
    \item We provide the first tractable policy for the canonical lost sales inventory system with better performance than the optimal policy of the equivalent canonical back-order system. The proof uses a comparison based on associated random variables.
    \item We prove that the projected inventory level policy is asymptotically optimal as the penalty for a lost sale approaches infinity under mild conditions on the demand distribution.
    \item We prove that the projected inventory level policy is a 1-step policy improvement upon the constant order policy and dominates the performance of the constant order policy when demand has an exponential distribution.
    \item We prove that the projected inventory level policy is asymptotically optimal as the lead time approaches infinity when demand has an exponential distribution. The proof approach is to show dominance of the PIL policy over the constant order policy such that it inherits is asymptotic optimality properties.
    \item We demonstrate numerically that the projected inventory level has superior performance also outside of the regimes where it is asymptotically optimal. We also provide results that aid with efficient computation within this class of policies.
\end{enumerate}

\section{Brief Literature Review}
\label{sec:LiteratureReview}

The canonical lost sales inventory system was first studied by \cite{KarlinScarf1958} and found
to have a more complicated optimal ordering policy than the canonical inventory system with
back-orders. The optimal ordering policy for the lost sales system depends on each outstanding order. Sensitivities of the optimal ordering decision to each outstanding order where first characterized by
\cite{Morton1969} and the analysis was later streamlined by \cite{Zipkin2008structure}.
Despite these results, computation and implementation of the optimal policy is not practical. Most of the literature studies heuristic policies without any optimality guarantees \citep[e.g.][]{Donselaaretal1996,BijvankJohansen2012,sunetal2014,vanJaarsveld2020} but with numerically favorable performance.  
Notable exceptions are \cite{Levietal2008MOOR} who prove that their dual balancing policy has an optimality gap of at most 100\% and \cite{chenetal2014PPTAS} who provide a pseudo polynomial time approximation scheme. Another active area of research is the study of base-stock policies when the demand distribution is unknown and must be learned online \citep{Huhetal2009MOOR,Zhangetal2019MS,agrawal2019learning}. We refer to \cite{BijvankVis2011} for a general review of lost sales inventory models.

We use the remainder of this brief literature review to focus on asymptotic optimality results as this is the focus of this paper. The notion of asymptotic optimality in lost sales and other difficult inventory systems has gained recent traction. \cite{goldbergetal2019} provide a survey of such results and outline methodologies to prove such results. The most important results for the lost sales inventory system are asymptotic optimality of base-stock policies as the cost of losing a sale approaches infinity \citep{Huhetal2009MS,Bijvanketal2015} and asymptotic optimality of constant order policies as the lead-time approaches infinity \citep{Goldbergetal2016,xin2016optimality,buetal2020,xin2019,Baietal2023}.
A natural question is whether any intuitive policies exist that are asymptotically optimal in both regimes. \cite{xin2019} propose the capped base-stock policy which was first introduced by \cite{johansen2008}. Under this policy orders are placed to reach a base-stock level except when this would cause the order size to exceed the cap. The parameters of this policy can be set such that it effectively reduces to either a constant order or base-stock policy. \cite{xin2019} show that such a policy is asymptotically optimal for long lead-times. It is straightforward to show that a capped base-stock policy is also asymptotically optimal in the other regime.
The PIL policy has a single parameter that cannot be set such that it effectively reduces to either a constant order or base-stock policy. Despite this, we provide asymptotic optimality results both for long lead-times and high per unit lost sales costs.

Our analysis is based on comparison against simpler policies for which asymptotic optimality results have already been established. The comparison uses association of random variables when comparing the PIL with the base-stock policy. A similar technique has been used to bound the order fill-rates in assemble-to-order systems by \cite{song1998}. We use policy improvement to compare the PIL policy to the constant order policy. This idea is often used to create an improved policy for a system that suffers from the curse of dimensionality so that only a simple policy can be analyzed \citep[see e.g.][]{tijms2003,haijemavanderwal2008}. We are not aware of prior work that uses association of random variables and/or policy improvement to establish asymptotic optimality results.

\section{Model}
\label{sec:model}
We consider a periodic review lost sales inventory system. In each period $t\in \N_0$ (where $\N_0:=\N\cup\{0\}$) we place an order that will arrive after a lead time of $\tau\in \N_0$ periods, i.e. at the start of period $t+\tau$. The inventory level at the beginning of period $t$, after receiving the order that was placed in period $t-\tau$, is denoted $I_t$. We denote the inventory level at the end of period $t$ as $J_t$. The order placed in period $t$ is denoted $q_{t+\tau}$ such that the order that arrives in period $t$ is denoted $q_t$. Demand in period $t$ is denoted by $D_t$ and $\{D_t\}_{t=0}^\infty$ is an i.i.d. sequence of random variables with $\mu:=\E[D_t]<\infty$. We let $D$ denote the generic one period demand random variable and its distribution  $F(x):=P(D\le x)$.  

Demand is satisfied from inventory whenever possible. If demand in a period exceeds the inventory level, there will be lost sales denoted by $L_t$ in period $t$. The system dynamics are given as:
\begin{align}
L_t & = (D_t-I_t)^+ \label{eq:dynamics1}\\
J_t & = (I_t-D_t)^+ \label{eq:dynamics2}\\
I_t & = (I_{t-1} - D_{t-1})^+ + q_{t} = J_{t-1}+q_t,\label{eq:dynamics3}
\end{align}
where $(x)^+:=\max(0,x)$. The state of this system in period $t$ is given by $\xb_t=(I_t,q_{t+1},q_{t+2},\ldots,q_{t+\tau-1})\in \R_+^{\tau}$. Since we focus on long run costs, and for ease of exposition, we assume the initial state $\xb_0$ to be $\mathbf{0}$, i.e. $I_0=0$ and $q_t=0$ for $t<\tau$ unless stated otherwise \citep[cf.][]{xin2016optimality}. For convenience in notation we define  $D[a,b]:=\sum_{i=a}^b D_i$ and similarly define $I[a,b]$, $J[a,b]$, $q[a,b]$ and $L[a,b]$.

In each period $t$ we may decide $q_{t+\tau}\ge 0$ based on $\xb_t$ and $t$. For our purposes, it will be convenient and sufficient to represent policies by a set of functions $\pol=\{\pol_t,t\in \N_0\}$, where $\pol_t$ maps states $\xb_t\in \R_+^{\tau}$ to actions $q_{t+\tau}\ge 0$. 

We consider three policies in the analytical sections of this paper. The base-stock policy $\base^S$ \citep[cf.][]{Huhetal2009MS} and the constant order policy $\cop^r$ \citep[cf.][]{xin2016optimality} are defined as follows:
 \begin{align*}
\base^S_t(\xb_t) & := \big(S-I_t-q[t+1,t+\tau-1]\big)^+ \\
\cop^r_t(\xb_t) &:= r.
\end{align*}
Here, $S$ and $r$ are the base-stock level and the constant order quantity, respectively. We assume for stability that $r<\mu$ \citep[cf.][]{xin2016optimality}. The projected inventory level policy $\pil^U$ with projected inventory level $U$ is given by 
\begin{align}
\label{eq:PUt}
\pil^U_t(\xb_t) & := (U-\E[J_{t+\tau-1}|\xb_t])^+ = \left(U-I_t -q[t+1,t+\tau-1] -\E\big[L[t,t+\tau-1]\big| \xb_t \big] +\tau\mu \right)^+.
\end{align}
Note that the expectations in \eqref{eq:PUt} take into account the state at time $t$. Therefore, at time $t$ the PIL policy places an order to raise the expected inventory level at time $t+\tau$ to $U$, if possible. 

The following lemma specifies conditions that ensure that the projected inventory level $U$ can be attained in every period; the proof is in Appendix \ref{sec:ProofAttainment}. 
\begin{lemma}
\label{lemma:Attainment}
For any given PIL policy, it is possible to place a non-negative order in each period $t\geq0$  to attain the projected inventory level $U\geq0$ provided that it is possible to do so in period 0 (i.e. provided $\xb_0$ satisfies $\E[J_{\tau-1}|\xb_0]\leq U$). 
\end{lemma}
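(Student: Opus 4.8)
The plan is to recast attainability as a scalar inequality on the state and prove it by induction on $t$. For a state $\xb_t$ write $g_t(\xb_t):=\E[J_{t+\tau-1}\mid\xb_t]$; then ``$U$ is attainable in period $t$'' is exactly the statement $g_t(\xb_t)\le U$, since in that case the non-negative order $q_{t+\tau}=U-g_t(\xb_t)$ raises $\E[I_{t+\tau}\mid\xb_t]$ to $U$, and the hypothesis on $\xb_0$ is precisely $g_0(\xb_0)\le U$. So it suffices to prove that under the PIL policy $g_t(\xb_t)\le U$ implies $g_{t+1}(\xb_{t+1})\le U$ almost surely, with the base case $t=0$ being the hypothesis. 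Under the inductive hypothesis the period-$t$ PIL order equals $U-g_t(\xb_t)\ge0$, so $\xb_{t+1}$ is genuinely produced by an order that attains $U$; moreover, given $\xb_t$, the only randomness in $\xb_{t+1}$ enters through $I_{t+1}=(I_t-D_t)^++q_{t+1}$, since every pipeline order appearing in $\xb_{t+1}$ is determined by $\xb_t$ (those inherited from $\xb_t$, together with $q_{t+\tau}=U-g_t(\xb_t)$).

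For the inductive step I would first pass to a worst case. Iterating the dynamics $J=(I-D)^+$, $I\to J+q$ is monotone in the inventory argument, so for fixed demands and orders $J_{t+\tau}$ is a nondecreasing function of $I_{t+1}$; hence $g_{t+1}(\xb_{t+1})=\E[J_{t+\tau}\mid\xb_{t+1}]$ is nondecreasing in $I_{t+1}$, and since $I_{t+1}=(I_t-D_t)^++q_{t+1}\le I_t+q_{t+1}$ with equality when $D_t=0$, we get $g_{t+1}(\xb_{t+1})\le\E[J_{t+\tau}\mid\xb_t,D_t=0]$. It is therefore enough to bound this last quantity by $U$. I would evaluate it by flow conservation over periods $t,\dots,t+\tau$: writing total sales as total demand minus total lost sales, taking conditional expectations (using $\E[D]=\mu$ and $L_t=0$ on $\{D_t=0\}$), substituting $q_{t+\tau}=U-g_t(\xb_t)$, and using the flow identity $g_t(\xb_t)=I_t+q[t+1,t+\tau-1]-\tau\mu+\E[L[t,t+\tau-1]\mid\xb_t]$ (this is just \eqref{eq:PUt} rearranged). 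After cancellation of the terms not involving lost sales one obtains
\[
\E[J_{t+\tau}\mid\xb_t,D_t=0]=U-\E[L[t,t+\tau-1]\mid\xb_t]+\E[L[t+1,t+\tau]\mid\xb_t,D_t=0],
\]
so the lemma reduces to the comparison $\E[L[t+1,t+\tau]\mid\xb_t,D_t=0]\le\E[L[t,t+\tau-1]\mid\xb_t]$.

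I expect this comparison to be the main obstacle, and I would prove it by a pathwise coupling based on an explicit formula for cumulative lost sales. For any block of $\tau$ consecutive periods, let $R_0\le R_1\le\cdots\le R_{\tau-1}$ be the on-hand level at the start of the block plus the cumulative arrivals up to and including each later period, and $\widehat D_0,\dots,\widehat D_{\tau-1}$ the partial sums of the demands faced; a one-line induction on the dynamics shows that the cumulative lost sales through the $j$-th period of the block equal $\big(\max_{0\le k\le j}(\widehat D_k-R_k)\big)^+$, so in particular the total lost sales over the block are nonincreasing in the supply sequence $(R_k)_k$. Aligning the block $t+1,\dots,t+\tau$ (which starts from $I_{t+1}=I_t+q_{t+1}$) with the block $t,\dots,t+\tau-1$ (which starts from $I_t$), their supply sequences are $R^{(1)}_j=I_t+q[t+1,t+1+j]$ and $R^{(2)}_j=I_t+q[t+1,t+j]$, so $R^{(1)}_j-R^{(2)}_j=q_{t+1+j}\ge0$ for every $j$; meanwhile $(D_{t+1},\dots,D_{t+\tau})$ and $(D_t,\dots,D_{t+\tau-1})$ are both $\tau$ i.i.d.\ copies of $D$ and the first block is independent of $\{D_t=0\}$, so coupling the two demand vectors yields the comparison in expectation. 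Combining this with the displayed identity gives $g_{t+1}(\xb_{t+1})\le U$, closing the induction. What remains is routine: the two monotonicity statements (induction on the dynamics), the flow-conservation bookkeeping, and the empty-sum conventions when $\tau=1$.
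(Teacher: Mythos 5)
Your proof is correct, and after the shared first reduction it takes a genuinely different route from the paper's. Both arguments begin the same way: reduce the inductive step to the worst case $D_t=0$ by noting that the next-period projected level is nondecreasing in $I_{t+1}=(I_t-D_t)^++q_{t+1}$, which is maximized at zero demand. From there the paper finishes with a single pathwise inequality: it writes the $\tau$-fold composed dynamics as a map $G(\cdot)$ satisfying $G\geq 0$ and $0\leq dG(y)/dy\leq 1$, and uses this (together with the exchangeability of $D_0$ and $D_\tau$) to get $(G(I_0)-D)^+\leq G((I_0-D)^+)$ almost surely, i.e. $\E[J_{\tau}\mid D_0=0]\leq \E[I_\tau]=U$ in one line. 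You instead pass through flow conservation to the identity
\[
\E[J_{t+\tau}\mid\xb_t,D_t=0]=U-\E\big[L[t,t+\tau-1]\mid\xb_t\big]+\E\big[L[t+1,t+\tau]\mid\xb_t,D_t=0\big],
\]
and then compare expected cumulative lost sales over the two overlapping $\tau$-period blocks via the max-representation of cumulative lost sales --- which is exactly the paper's identity \eqref{eq:Lsum}, used there only later and for a different lemma --- plus a shift coupling of the two i.i.d.\ demand vectors and monotonicity in the supply sequence (the coordinatewise gap being $q_{t+1+j}\geq 0$). Your version is longer and requires the coupling step, but it makes explicit \emph{why} attainability is preserved (the expected lost sales over the shifted block can only decrease because each period's supply is weakly larger), whereas the paper's Lipschitz argument is more compact but more opaque. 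Both are complete; the only bookkeeping you should be careful with is that conditioning on $D_t=0$ must be read as evaluating the regular conditional expectation at $d_0=0$ (the paper does the same), and that the empty-sum convention $q[t+1,t]=0$ handles $j=0$ and $\tau=1$ correctly, as you note.
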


Any excess inventory at the end of a period incurs a holding cost $h>0$ per item per period. Any lost sales accrued during a period incur a lost sales penalty cost $p>0$ per item lost. Denote the costs incurred in period $t$ by $c_t:=h J_t + p L_t$, and let $c[a,b]:=\sum_{t=a}^b c_t$. We write $c[a,b](\pol)$ to make the dependence on the policy $\pol$ explicit. The cost-rate associated with a policy $\pol$ is then
\[
C(\pol):=\limsup_{T\to\infty}\E\left[ \frac{1}{T-\tau+1}  c[\tau,T](\pol)\right]
\]
Let $S^*\in \argmin_S{C(\base^S)}$, $r^*\in \argmin_r{C(\cop^r)}$, and $U^*\in\argmin_U{C(\pil^U)}$ denote the optimal base-stock level, constant order quantity, and projected inventory level respectively. Let $C^*$ denote the long run expected costs of an optimal policy.

\section{Long lead time asymptotics}
\label{sec:LongLeadTimes}
Constant-order policies were proven to be asymptotically optimal for long lead-time by \cite{Goldbergetal2016} \citep[see also][]{xin2016optimality}. This result has deepened the understanding of lost sales inventory systems. Empirically, we observe that the PIL policy outperforms the constant order policy, also for long lead-times. In this sense, the PIL policy is unlike the base-stock policy, which cannot match the constant order policy performance for large lead-times. In this section, we will theoretically underpin this finding. In particular, we prove that the projected inventory policy is in expectation superior to the constant order policy when demand is exponentially distributed. 

Our analysis is based on the following simple idea.
Consider the total costs incurred from time $t+\tau$ up to time $T$, given the state $\xb_t$ and order $q$ and assuming $q_k=r$ for $k>t+\tau$, and denote this total cost by $F^T(q) = hJ[t+\tau,T]+pL[t+\tau,T]$. Define $f(q|\xb_t)=\lim_{T\to\infty}\E[ F^T(q)-F^T(r) \mid \xb_t]$. A sensible policy may decide $q_{t+\tau}\in\argmin_{q\geq 0} f(q|\xb_t)$ for any pipeline $\xb_t$. This policy may be recognized as a single-step policy improvement to the constant order policy, and it may therefore be expected to dominate the constant order policy with order quantity $r$. 

In the following, we sketch a heuristic argument that shows
$\pil^U (\xb_t)\in \argmin_{q\geq0} f(q|\xb_t)$ with $U=p(\mu-r)/h$. This heuristic argument as well as the intuition that this policy dominates the constant order policy will be made rigorous in Section \ref{sec:LongLeadTimeRigorous}.

Let us determine the $q=q_{t+\tau}$ that minimizes $f$ for some initial state $\xb$. Increasing $q$ by $\epsilon$ has two effects on the infinite horizon costs: 1) $\epsilon$ more demand is eventually satisfied (not lost) and a penalty cost $p\epsilon$ is averted 2) From time $t+\tau$ until the first stockout, the inventory level increases by $\epsilon$ (for $\epsilon$ \emph{small}). Suppose $t+\tau=0$ for notational convenience. Let $R$ denote the time of the first lost sale from period $0$: $R=\min\{t\in\N_0|L_t>0\}$. Then
\begin{align}
\frac{df(q)}{dq}=\lim_{\epsilon\to 0} \frac{f(q+\epsilon)-f(q)}{\epsilon}=
\lim_{\epsilon\to 0} \frac{-p\epsilon +h\E[R]\epsilon}{\epsilon}=h\E[R]-p.\label{eq:fderivative}
\end{align}
The expectation of $R$ can be found by noting that $R$ is a stopping time and $\E[L_R]=\mu$ due to the memoryless property of the exponential demand distribution. For $t<R$ there are no stockouts so that $J_t=J_{t-1}+r-D_t$. Thus $J_t=I_0-D[0,t] + t r =J_{-1}+q -D[0,t] + t r$ while for $t=R$ we have $J_R=0$ and $L_R=-J_{-1}-q + D[0,R] - R r$. In particular this implies (using Wald's identity) $\E[L_R]=-\E[J_{-1}]-q+(\E[R]+1)\mu-\E[R]r$.  Using that also $\E[L_R]=\mu$ and solving for $\E[R]$ yields
$\E[R]=\frac{\E[J_{-1}]+q}{\mu-r}$. Thus by \eqref{eq:fderivative}, $f$ must be a parabola in $q$ and $df(q)/dq=0$ if and only if $q=\frac{p(\mu-r)}{h}-\E[J_{-1}]$. Since $I_0=J_{-1}+q$, this holds if and only if $q$ follows a projected inventory level policy with level $U=\frac{p(\mu-r)}{h}$.

\subsection{Dominance of PIL policies over COP policies}
\label{sec:LongLeadTimeRigorous}

The heuristic argument above can be made rigorous as follows. 
Observe that the orders under $\cop^r$ are independent of the state. Hence, the \emph{bias} (or relative value function) of the $\cop^r$ policy can be expressed as a function of inventory level only as in the following definition. 

\begin{definition}\label{def:bias}
Let $r\in [0,\E[D])$, and let $g^r=C(\cop^r)$ be the long run average costs of the $\cop^r$ policy. Then the bias $\mathcal{H}^r(\cdot):\R^+\to\R$ associated with $\cop^r$ satisfies:
\begin{align}
 \mathcal{H}^r(x) = \E_{D}\big[ h(x-D)^+ + p (D-x)^+ + \mathcal{H}^r((x-D)^++r)\big]- g^r,\label{eq:biasdef}
\end{align}
for any non-negative $x\geq 0$. To make the bias unique we also impose $\mathcal{H}^r(0)=0$. 
\end{definition}
This bias $\mathcal{H}^r(x)$ can be interpreted as the additional cost
over an infinite horizon of starting with $x$ items in inventory and a pipeline with orders of size $r$ instead of starting with 0 items in inventory and a pipeline with orders of size $r$. Intuitively, $f(q|\xb_t)$ equals $\E[\mathcal{H}^r(J_{t+\tau-1}+q)-\mathcal{H}^r(J_{t+\tau-1}+r)|\xb_t]$, and informed by the heuristic argument made earlier one can guess that, like $f$, this bias should be a parabola. 
\begin{lemma}
\label{lem:bias}
When demand has an exponential distribution, the bias of the constant order policy with constant order quantity $r<\mu$ is a parabola:
\begin{equation}
\label{eq:biasexplicit}
    \mathcal{H}^r(x)=\frac{h}{2(\mu-r)}x^2-px,
\end{equation}
and $g^r=C(\cop^r)=p(\mu-r) + h \frac{r^2}{2(\mu-r)}$.
\end{lemma}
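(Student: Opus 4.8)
The plan is to verify directly that the pair stated in the lemma solves the fixed-point equation~\eqref{eq:biasdef} together with the normalization $\mathcal{H}^r(0)=0$; since, as noted after Definition~\ref{def:bias}, the bias is the unique such pair, this proves the lemma (and incidentally shows that the bias exists in the exponential case). Write $a:=h/(2(\mu-r))$, so the candidate reads $\mathcal{H}^r(x)=ax^2-px$, and record the moment identities valid when $D$ is exponential with mean $\mu$: $\E[(D-x)^+]=\mu e^{-x/\mu}$, $\E[(x-D)^+]=x-\mu+\mu e^{-x/\mu}$, and $\E[((x-D)^+)^2]=x^2-2\mu x+2\mu^2-2\mu^2 e^{-x/\mu}$. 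The first is the memoryless property; the other two follow from it by integration in $x$, using $\frac{d}{dx}\E[(x-D)^+]=F(x)$ and $\frac{d}{dx}\E[((x-D)^+)^2]=2\,\E[(x-D)^+]$.

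I would then substitute these into the right-hand side of~\eqref{eq:biasdef}: with $W:=(x-D)^+$ and $\E[\mathcal{H}^r(W+r)]=a\,\E[W^2]+2ar\,\E[W]+ar^2-p\,\E[W]-pr$, the right-hand side becomes a linear combination of $1$, $x$, $x^2$, and $e^{-x/\mu}$. The decisive observation---and the only point at which exponentiality is used---is that the coefficient of $e^{-x/\mu}$ equals $\mu\,(h-2a\mu+2ar)$, which vanishes precisely for $a=h/(2(\mu-r))$; that same choice simultaneously makes the coefficient of $x$ on the right equal $-p$, matching $\mathcal{H}^r$, while the coefficient of $x^2$ is $a$ on both sides automatically. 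What remains is a matching of constants, which forces $g^r=-h\mu+2a\mu(\mu-r)+ar^2+p(\mu-r)$; since $2a\mu(\mu-r)=h\mu$, this collapses to $g^r=p(\mu-r)+hr^2/(2(\mu-r))$, as claimed. Since $\mathcal{H}^r(0)=0$ is immediate, the candidate is the bias.

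It then remains to confirm that the constant $g^r$ appearing in~\eqref{eq:biasdef} really is $C(\cop^r)$, i.e.\ that Definition~\ref{def:bias} is self-consistent in this case. Under $\cop^r$ the inventory process $\{I_t\}$ is a reflected random walk with drift $r-\mu<0$, hence positive recurrent with a stationary law of finite second moment; iterating~\eqref{eq:biasdef}, dividing by the horizon, and using that $\mathcal{H}^r$ grows only quadratically gives $\E[\mathcal{H}^r(I_T)]/(T-\tau+1)\to0$, so the long-run average cost of $\cop^r$ equals $g^r$. As an independent check one can instead compute $C(\cop^r)$ from stationarity: flow conservation yields $\E_\pi[L]=\mu-r$, hence $\mu\,\E_\pi[e^{-I/\mu}]=\mu-r$; squaring the stationary identity $I\eqd(I-D)^++r$ and taking expectations gives $\E_\pi[I]=r(2\mu-r)/(2(\mu-r))$; and $C(\cop^r)=\E_\pi[hJ+pL]=h\,\E_\pi[I]-hr+p(\mu-r)$ reduces to the same expression.

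I expect no genuine obstacle: the argument is a verification. The two points that require some care are the bookkeeping behind the cancellation of the $e^{-x/\mu}$ terms (which is what confines the lemma to exponential demand and would fail otherwise) and the routine-but-necessary justification that the constant in the Poisson-type equation~\eqref{eq:biasdef} coincides with the long-run average cost; both are handled above.
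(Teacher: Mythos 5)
Your verification is correct, and the core of it—substituting the quadratic ansatz into the fixed-point equation \eqref{eq:biasdef} and checking that the exponential terms cancel exactly when $a=h/(2(\mu-r))$—is the same computation as in Appendix~\ref{sec:ProofBias}. The one genuine structural difference lies in how $g^r=C(\cop^r)$ is pinned down. The paper computes $C(\cop^r)$ \emph{first}, by recognizing $J_{t+1}=(J_t+r-D_t)^+$ as a Lindley recursion and quoting the Pollaczek--Khinchine formula for the mean waiting time in an $M/D/1$ queue, and only then verifies the functional equation with that value of $g^r$ inserted. You instead let the coefficient matching \emph{determine} $g^r$ and then close the loop by showing this constant coincides with the long-run average cost, via iteration of \eqref{eq:biasdef} plus the quadratic-growth/positive-recurrence bound (which in effect re-derives the paper's Lemma~\ref{lem:Martingaleexpression} and the transient-term argument used later in the proof of Theorem~\ref{thm:PilOutperformsCOP}), and independently via the stationarity identities $\E_\pi[L]=\mu-r$ and the squared Lindley equation. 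Your route is marginally more self-contained (it does not import a queueing formula and makes explicit the consistency of Definition~\ref{def:bias}, which the paper leaves implicit in the definition itself), at the cost of a little more bookkeeping; the paper's route is shorter because the $M/D/1$ identification delivers $g^r$ in one line. Both arguments are sound.
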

The proof of Lemma \ref{lem:bias} can be found in Appendix \ref{sec:ProofBias} and is a straightforward verification that the proposed solution satisfies the definition.

Confirming our intuition, we next show that the policy found by a single improvement step on the bias of a constant order policy is a projected inventory level policy.
\begin{lemma}\label{lem:pilisimproving}
Let $D$ be exponentially distributed and for any $h,p, r<\mu$, let $U(r)=p(\mu-r)/h$. For any $\xb_t$, if $q_{t+\tau}=\pil^{U(r)}(\xb_t)$ then $q_{t+\tau}\in \argmin_{q\ge 0} \E[\mathcal{H}^r(J_{t+\tau-1}+q)\mid\xb_t]$.
\end{lemma}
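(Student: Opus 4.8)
The plan is to turn the minimization $\min_{q\ge0}\E[\mathcal{H}^r(J_{t+\tau-1}+q)\mid\xb_t]$ into the minimization of a one–dimensional convex quadratic in $q$, using the explicit parabola for $\mathcal{H}^r$ supplied by Lemma~\ref{lem:bias}, and then to verify that the constrained minimizer coincides with the PIL order \eqref{eq:PUt}.

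First I would note that the order $q_{t+\tau}$ placed in period $t$ arrives only at the start of period $t+\tau$, so the end-of-period inventory $J_{t+\tau-1}$ is, conditionally on $\xb_t$, a random variable that does not depend on the decision variable $q=q_{t+\tau}$. Since inventory is only increased by incoming orders, $J_{t+\tau-1}\le I_t+q[t+1,t+\tau-1]$, which is a constant given $\xb_t$; hence $J_{t+\tau-1}$ is bounded and all of its conditional moments are finite. Writing $m:=\E[J_{t+\tau-1}\mid\xb_t]$ and substituting $\mathcal{H}^r(x)=\frac{h}{2(\mu-r)}x^2-px$ from Lemma~\ref{lem:bias}, expanding the square yields
\[
\E\!\left[\mathcal{H}^r(J_{t+\tau-1}+q)\mid\xb_t\right]=\frac{h}{2(\mu-r)}\left(\E[J_{t+\tau-1}^2\mid\xb_t]+2qm+q^2\right)-p(m+q),
\]
which, as a function of $q$, is a convex parabola: its leading coefficient $\frac{h}{2(\mu-r)}$ is positive because $r<\mu$, and the term $\frac{h}{2(\mu-r)}\E[J_{t+\tau-1}^2\mid\xb_t]$ is a $q$-independent constant.

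Next I would differentiate in $q$ and equate to zero, $\frac{h}{\mu-r}(m+q)-p=0$, so the unconstrained minimizer is $q^\star=\frac{p(\mu-r)}{h}-m=U(r)-\E[J_{t+\tau-1}\mid\xb_t]$. Because the objective is convex in $q$, its minimizer over the feasible set $\{q\ge0\}$ equals $(q^\star)^+$, i.e. $(U(r)-\E[J_{t+\tau-1}\mid\xb_t])^+$. By the definition \eqref{eq:PUt} of the PIL policy this is exactly $\pil^{U(r)}_t(\xb_t)$, which proves $q_{t+\tau}=\pil^{U(r)}(\xb_t)\in\argmin_{q\ge0}\E[\mathcal{H}^r(J_{t+\tau-1}+q)\mid\xb_t]$.

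There is no serious obstacle here once Lemma~\ref{lem:bias} is available; the computation is elementary. The only points needing a word of justification are the independence of $J_{t+\tau-1}$ from $q_{t+\tau}$ given $\xb_t$ and the finiteness of the conditional second moment, both of which are immediate from the dynamics \eqref{eq:dynamics1}--\eqref{eq:dynamics3}. In effect, the lemma records that the closed form of the bias makes the one-step policy-improvement subproblem solvable explicitly, and that its solution is a projected-inventory-level order.
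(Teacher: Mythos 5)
Your proof is correct and follows essentially the same route as the paper's: both substitute the explicit parabola from Lemma~\ref{lem:bias}, reduce the problem to minimizing a convex quadratic in $q$ (the paper completes the square via the variance decomposition, you expand and differentiate --- a cosmetic difference), and identify the constrained minimizer $(U(r)-\E[J_{t+\tau-1}\mid\xb_t])^+$ with the PIL order. Your explicit remarks that $J_{t+\tau-1}$ is independent of $q_{t+\tau}$ given $\xb_t$ and has finite conditional second moment are correct and slightly more careful than the paper's treatment.
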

\begin{proof}{Proof.}
 Using \eqref{eq:biasexplicit}, one may derive $\mathcal{H}^{r}(x)=a_1(x-U(r))^2+a_2$ with $a_1=\frac{h}{2(\mu-r)}>0$ and $a_2=\frac{-p^2(\mu-r)}{2h}$, thus $U(r)$ is the unique minimizer of $\mathcal{H}^{r}(\cdot)$. Now observe that
\begin{align}
 \E[\mathcal{H}^{r}(J_{t+\tau-1}+q)\mid\xb_t] &= \E[a_1(J_{t+\tau-1}+q-U(r))^2+a_2\mid \xb_t] \notag\\
&=a_1\left[\var[J_{t+\tau-1}+q-U(r)\mid\xb_t] + \E[J_{t+\tau-1}+q-U(r)\mid\xb_t]^2 \right]+a_2 \notag\\
\label{eq:EHq}
&=a_1\var[J_{t+\tau-1}\mid\xb_t] + a_1(\E[J_{t+\tau-1}\mid\xb_t]+q-U(r))^2 +a_2,
\end{align}
where the final equality follows because $\var[q \mid \xb_t]=0$ for any deterministic policy $\pol$. Clearly $q=\pil^{U(r)}(\xb_t)=(U(r)-\E[J_{t+\tau-1}\mid \xb_t ])^+$ minimizes \eqref{eq:EHq}.\Halmos
\end{proof}

For our continuous state-space, continuous action-space model, there appear to be no standard Markov decision process results that can be leveraged to prove from Lemma~\ref{lem:pilisimproving} that the PIL dominates the constant order policy. Our proof uses the following result, the proof of which appears in Appendix~\ref{sec:ProofMartingaleexpression}:
\begin{lemma}\label{lem:Martingaleexpression}
Let $t_1\leq t_2$, $t_1,t_2\in\N_0$, $r\in [0,\E[D])$, $g^r=C(\cop^r)$, and suppose $q_t=r$
for all $t\in \{t_1+1,\ldots,t_2\}$. Then
\[ \E_{D_{t_1},\ldots,D_{t_2}}\left[ c[t_1,t_2](\cop^r) \mid I_{t_1}\right] = \mathcal{H}^r(I_{t_1})-\E_{D_{t_1},\ldots,D_{t_2}}[\mathcal{H}^r(I_{t_2+1})|I_{t_1}] + (t_2+1-t_1)g^r. \]
\end{lemma}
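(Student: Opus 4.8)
The plan is to prove the identity by iterating the bias recursion \eqref{eq:biasdef} once per period over $t=t_1,\dots,t_2$ and telescoping. First I would rewrite Definition~\ref{def:bias} in ``Bellman'' form: for every $x\ge 0$,
\begin{equation*}
\mathcal{H}^r(x)+g^r=\E_D\big[h(x-D)^+ + p(D-x)^+ +\mathcal{H}^r\big((x-D)^+ + r\big)\big].
\end{equation*}
Evaluating this at $x=I_t$ for a fixed $t\in\{t_1,\dots,t_2\}$, with $D$ replaced by the period-$t$ demand $D_t$, and invoking the dynamics \eqref{eq:dynamics1}--\eqref{eq:dynamics3} together with the hypothesis $q_{t+1}=r$ to identify $h(I_t-D_t)^+ + p(D_t-I_t)^+ = hJ_t+pL_t = c_t$ and $(I_t-D_t)^+ + r = J_t+q_{t+1} = I_{t+1}$, I obtain the one-step relation
\begin{equation*}
\E_{D_t}\big[\,c_t+\mathcal{H}^r(I_{t+1})\mid I_t\,\big]=\mathcal{H}^r(I_t)+g^r .
\end{equation*}

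Next I would upgrade this to a statement conditioned on $I_{t_1}$ and sum. Because $I_t$ is a deterministic function of $I_{t_1}$ and $D_{t_1},\dots,D_{t-1}$ while $D_t$ is independent of those, the one-step relation remains valid with $\sigma(I_t)$ enlarged to $\sigma(I_{t_1},D_{t_1},\dots,D_{t-1})$; applying $\E[\,\cdot\mid I_{t_1}]$ to both sides (everything is bounded given $I_{t_1}$, since $I_s\le I_{t_1}+(s-t_1)r$ for $s\ge t_1$) yields
\begin{equation*}
\E\big[c_t\mid I_{t_1}\big]=\E\big[\mathcal{H}^r(I_t)-\mathcal{H}^r(I_{t+1})\mid I_{t_1}\big]+g^r .
\end{equation*}
Summing over $t=t_1,\dots,t_2$, the $\mathcal{H}^r$ terms telescope to $\mathcal{H}^r(I_{t_1})-\mathcal{H}^r(I_{t_2+1})$, there are $t_2+1-t_1$ copies of $g^r$, and $c[t_1,t_2]=\sum_{t=t_1}^{t_2}c_t$; since $g^r=C(\cop^r)$ by hypothesis, this is exactly the asserted formula.

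The argument is essentially a telescoping sum, so there is no deep obstacle; the points that need care are (i) the Markov/tower step — making precise that conditioning on $I_t$ may be replaced by conditioning on the whole demand history up to period $t-1$ without changing the conditional expectation, which rests only on $\{D_t\}$ being i.i.d.\ and independent of $I_{t_1}$ — and (ii) the boundary period $t=t_2$, where identifying $(I_{t_2}-D_{t_2})^+ + r$ with $I_{t_2+1}$ requires $q_{t_2+1}=r$, which is implicit in evaluating the costs under $\cop^r$ (equivalently, one reads the hypothesis as $q_t=r$ for $t\in\{t_1+1,\dots,t_2+1\}$). I would also remark that the derivation uses only that $\mathcal{H}^r$ solves \eqref{eq:biasdef} with constant $g^r=C(\cop^r)$ and that $\mathcal{H}^r(I_s)$ is integrable given $I_{t_1}$, so it does not invoke the exponential-demand hypothesis of Lemma~\ref{lem:bias}; under that hypothesis one may of course substitute the explicit quadratic \eqref{eq:biasexplicit}.
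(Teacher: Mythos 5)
Your proposal is correct and is essentially the paper's own argument: the paper proves the same identity by induction on $t_2$, expanding $\mathcal{H}^r(I_{t_2+1})$ via the bias equation \eqref{eq:biasdef} and the tower property at each step, which is exactly your one-step relation followed by telescoping, just written in unrolled form. Your remarks on the tower/Markov step and on reading the hypothesis as $q_t=r$ for $t\in\{t_1+1,\dots,t_2+1\}$ (so that $(I_{t_2}-D_{t_2})^+ + r = I_{t_2+1}$) address exactly the points the paper treats implicitly.
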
 

We are now ready to establish the main result of this section. \begin{theorem}\label{thm:PilOutperformsCOP}
If demand has an exponential distribution then the best PIL policy $\pil^{U^*}$ outperforms the best constant order policy $\cop^{r^*}$. In particular
$C(\pil^{U^*})\leq  C(\pil^{U(r^*)})\le C(\cop^{r^*})$ for any $\tau\in\N_0$, where $U(r)$ is given by Lemma~\ref{lem:pilisimproving}. 
\end{theorem}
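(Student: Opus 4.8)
The plan is to prove the chain $C(\pil^{U(r^*)}) \le C(\cop^{r^*})$, since the first inequality $C(\pil^{U^*}) \le C(\pil^{U(r^*)})$ is immediate from the definition of $U^*$ as a minimizer. So fix $r = r^*$, write $U = U(r)$, and let $\mathcal{H}^r$ be the bias of Lemma~\ref{lem:bias}. The idea is a telescoping/coupling argument on a finite horizon $[\tau, T]$ comparing the PIL policy $\pil^U$ against the constant order policy $\cop^r$, using Lemma~\ref{lem:Martingaleexpression} to replace the actual costs under $\cop^r$ by a difference of bias terms, and using Lemma~\ref{lem:pilisimproving} to argue that each ordering decision of $\pil^U$ is at least as good (in the one-step-lookahead-on-the-bias sense) as ordering $r$.

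First I would set up the comparison. Run the PIL policy $\pil^U$ from the zero initial state; denote its inventory levels $I_t, J_t$ and orders $q_{t+\tau} = \pil^U_t(\xb_t)$. Consider the telescoping sum
\begin{align*}
\E\!\left[ c[\tau,T](\pil^U) \right] = \sum_{t=\tau}^{T} \E\!\left[ c_t(\pil^U) \right],
\end{align*}
and I want to bound this by $(T-\tau+1)g^r$ plus boundary terms that are $o(T)$. The key step is to introduce, for each period $t$, the ``go-to-constant-order'' cost: starting from the state reached at time $t$ under $\pil^U$ and then switching to $\cop^r$ forever. Using Lemma~\ref{lem:Martingaleexpression} with $t_1 = t+\tau-1$ (the period in which the order $q_{t+\tau}$ has just been decided, so $I_{t+\tau} = J_{t+\tau-1} + q_{t+\tau}$ is determined) and letting $t_2 \to \infty$, together with the fact that $\mathcal{H}^r$ is a parabola so $\E[\mathcal{H}^r(I_{t_2+1})]/(t_2) \to$ something controllable, one gets an identity of the form: the infinite-horizon cost of "behave as $\pil^U$ for one more step, then $\cop^r$" minus the infinite-horizon cost of "$\cop^r$ from here" equals $\E[\mathcal{H}^r(J_{t+\tau-1}+q_{t+\tau}) - \mathcal{H}^r(J_{t+\tau-1}+r) \mid \xb_t]$, which by Lemma~\ref{lem:pilisimproving} is $\le 0$. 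Summing these one-step-improvement inequalities over $t = \tau, \ldots, T$ telescopes the $\mathcal{H}^r$ terms and leaves $\E[c[\tau,T](\pil^U)] \le (T-\tau+1)g^r + \E[\mathcal{H}^r(\text{something at time }\tau)] - \E[\mathcal{H}^r(\text{something at time }T+\tau)] + (\text{finite correction})$.

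To make the telescoping clean I would more carefully define $V_t = \E[\mathcal{H}^r(I_{t+\tau})]$ where $I_{t+\tau} = J_{t+\tau-1} + \pil^U_t(\xb_t)$ under the PIL dynamics. Then I claim $\E[c_t(\pil^U)] + V_t - V_{t-1} \le g^r$ for each $t$: expand $V_t - V_{t-1}$ using the bias recursion \eqref{eq:biasdef} evaluated at $I_{t+\tau-1}$ — note $\mathcal{H}^r(I_{t+\tau-1}) = \E_D[h(I_{t+\tau-1}-D)^+ + p(D-I_{t+\tau-1})^+ + \mathcal{H}^r((I_{t+\tau-1}-D)^+ + r)] - g^r$ — and observe that $(I_{t+\tau-1}-D)^+ + r = J_{t+\tau-1} + r$ corresponds to ordering $r$, whereas PIL orders $\pil^U_t(\xb_t)$; Lemma~\ref{lem:pilisimproving} says $\E[\mathcal{H}^r(J_{t+\tau-1}+\pil^U_t(\xb_t)) \mid \xb_t] \le \E[\mathcal{H}^r(J_{t+\tau-1}+r)\mid \xb_t]$. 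The period-$t$ holding/lost-sales cost $c_t(\pil^U) = hJ_t + pL_t$ matches the $h(\cdot)^+ + p(\cdot)^+$ terms (with the right time index bookkeeping, since $J_t, L_t$ are determined by $I_t$ and $D_t$). Combining gives the desired one-step inequality, and summing over $t$ from $\tau$ to $T$ telescopes to $\E[c[\tau,T](\pil^U)] \le (T-\tau+1)g^r + V_{\tau-1} - V_T$. Dividing by $T-\tau+1$, taking $\limsup_{T\to\infty}$, and arguing $V_T/(T-\tau+1) \to 0$ — which follows because $\pil^U$ keeps $\E[J_{t+\tau-1}] = U$ fixed whenever attainable and $\mathcal{H}^r$ has at most quadratic growth, so $\E[\mathcal{H}^r(I_{t+\tau})]$ is bounded uniformly in $t$ once $U$ is attained (Lemma~\ref{lemma:Attainment}) — yields $C(\pil^U) \le g^r = C(\cop^r)$.

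The main obstacle I anticipate is the interchange of limits and the control of the boundary term $V_T$: one must justify that $\E[\mathcal{H}^r(I_{t+\tau})]$ stays bounded (or at least $o(T)$) along the PIL trajectory. Since $\mathcal{H}^r(x) = \tfrac{h}{2(\mu-r)}x^2 - px$ is quadratic, this needs a second-moment bound on $I_{t+\tau}$ under $\pil^U$; the PIL policy controls the conditional \emph{mean} $\E[J_{t+\tau-1}\mid \xb_t]$ but not directly the variance, so one has to argue the variance does not blow up — plausibly via the explicit recursion and the fact that orders are bounded by $U + \tau\mu$. A related subtlety is handling the positive part in $\pil^U_t(\xb_t) = (U - \E[J_{t+\tau-1}\mid\xb_t])^+$: when the projected level is already above $U$ the policy orders $0 \ne r$ and Lemma~\ref{lem:pilisimproving}'s conclusion still holds because $0$ is then the constrained minimizer of the parabola $\E[\mathcal{H}^r(J_{t+\tau-1}+q)\mid\xb_t]$ over $q\ge 0$, so the one-step inequality is preserved; this should be noted explicitly. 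The rest is routine bookkeeping with the time indices and Wald-type identities already established.
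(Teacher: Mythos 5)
Your argument is correct, and it reaches the result by a genuinely different decomposition than the paper's. The paper interpolates between $\pil^{U(r^*)}$ and $\cop^{r^*}$ through the hybrid policies $\G^{\tilde{t}}$ (PIL for the first $\tilde{t}$ orders, then constant orders), compares adjacent hybrids via the multi-step identity of Lemma~\ref{lem:Martingaleexpression}, and telescopes over $\tilde{t}$; your refined version instead runs a single Lyapunov-type drift inequality along the PIL trajectory, $\E[c_{t+\tau-1}(\pil^{U})]+V_t-V_{t-1}\le g^r$ with $V_t=\E[\mathcal{H}^r(I_{t+\tau})]$, which needs only the one-step bias recursion \eqref{eq:biasdef} together with Lemma~\ref{lem:pilisimproving} and never invokes Lemma~\ref{lem:Martingaleexpression} or the hybrid family at all. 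Your route is the more economical one: the boundary terms reduce to $V_0-V_{T-\tau+1}$, both bounded because the PIL inventory position never exceeds $U(r^*)+\tau\mu$ and $\mathcal{H}^r$ is a fixed parabola, so a plain $\limsup$ finishes the proof --- you correctly identified this second-moment control as the one point needing care, and the deterministic bound on the inventory position settles it; the positive-part subtlety you flag is already absorbed into the statement of Lemma~\ref{lem:pilisimproving}, which asserts the PIL order is the \emph{constrained} minimizer over $q\ge0$. What the paper's heavier machinery buys in exchange is the exact identity \eqref{eq:QuadDivergence}: by keeping equalities throughout (which forces the $\min_{q\ge0}=\min_{q\in\R}$ step, justified via Lemma~\ref{lemma:Attainment}, and the more delicate $\limsup$/$\liminf$ bookkeeping plus the steady-state argument for $\E[I_{T+1}(\cop^{r^*})]$), the authors express the cost gap as $a_1$ times the time-averaged squared deviation between the two policies' orders, whereas your one-sided inequality yields only the dominance $C(\pil^{U(r^*)})\le C(\cop^{r^*})$ --- which is all Theorem~\ref{thm:PilOutperformsCOP} actually claims. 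One cosmetic caution: your opening paragraphs sketch the comparison by sending $t_2\to\infty$ in Lemma~\ref{lem:Martingaleexpression}, which is not needed and would require extra justification; the $V_t$ argument you give afterwards is the version to keep.
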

\begin{proof}{Proof.}
We focus on bounding $\E[c[\tau,T](\pil^{U(r^*)}) - c[\tau,T](C^{r^*})]$. A device in the proof will be a policy $\G^{\tilde{t}}$ that places the first $\tilde{t}\in \N_0$ orders using the projected inventory policy $\pil^{U(r^*)}$, and subsequent orders using the optimal constant-order policy $\cop^{r^*}$:
\[ \G^{\tilde{t}}_t(\xb):=\begin{cases}
\pil^{U(r^*)}_t(\xb), & t< \tilde{t}\\
\cop^{r^*}_t(\xb)=r^*, & t\ge \tilde{t}.
\end{cases}
 \]
Let $I_{t}(A)$ denote the random variable $I_{t}$ when policy $\pol$ is adopted, and let $\bar{t}=\tilde{t}+\tau$. We will compare expected interval costs for the policies $\G^{\tilde{t}+1}$ and $\G^{\tilde{t}}$:
\begin{align}
\E\left[c[\tau,T](\G^{\tilde{t}+1})-c[\tau,T](\G^{\tilde{t}})\right]&=\E\left[c[\bar{t},T](\G^{\tilde{t}+1})-c[\bar{t},T](\G^{\tilde{t}})\right]\nonumber\\
&=\E\left[\E_{D_{\bar{t}},\ldots,D_{T}}\left[ c[\bar{t},T] \Big| I_{\bar{t}}(\G^{\tilde{t}+1})\right] -\E_{D_{\bar{t}},\ldots,D_{T}}\left[ c[\bar{t},T] \Big| I_{\bar{t}}(\G^{\tilde{t}})\right]    \right]\nonumber\\
&=\E[\mathcal{H}(I_{\bar{t}}(\G^{\tilde{t}+1}))-\mathcal{H}(I_{\bar{t}}(\G^{\tilde{t}}))-\mathcal{H}(I_{T+1}(\G^{\tilde{t}+1}))+\mathcal{H}(I_{T+1}(\G^{\tilde{t}}))].\label{eq:costdeltaidentity}
\end{align}
Here and elsewhere in this proof, $\mathcal{H}$ denotes $\mathcal{H}^{r^*}$. Also, the first equality follows because $\G^{\tilde{t}+1}$ and $\G^{\tilde{t}}$ coincide for $t< \tilde{t}$, and thus, since $\xb_0=\mathbf{0}$ for $t\le \tilde{t}$ the distributions of $\xb_{t}$, $J_{t+\tau-1}$, $L_{t+\tau-1}$ and $c_{t+\tau-1}$ are the same for the two policies. The second equality is by conditioning on the inventory at time $\bar{t}$. For the third equality, note that $\G^{\tilde{t}+1}_t=\G^{\tilde{t}}_t=\cop^{r^*}$ for $t\ge \tilde{t}+1$, hence $q_{t}=r^*$ for $t\ge \tilde{t}+1+\tau$ for both policies, and hence, we can substitute the identity of Lemma~\ref{lem:Martingaleexpression}. 

Now note that $I_{\bar{t}}(\G^{\tilde{t}+1}) = J_{\bar{t}-1}+\pil_{\tilde{t}}^{U(r^*)}(\xb_{\tilde{t}})$ and $I_{\bar{t}}(\G^{\tilde{t}}) = J_{\bar{t}-1}+r^*$, while $\xb_{\tilde{t}}$ and $J_{\bar{t}-1}$ are identically distributed for both policies since they coincide for $t<\tilde{t}$. We condition on $\xb_{\tilde{t}}$ and find:
\begin{align}
\E\left[\mathcal{H}\left(I_{\bar{t}}(\G^{\tilde{t}+1})\right)-\mathcal{H}\left(I_{\bar{t}}(\G^{\tilde{t}})\right)\right]&=\E\left[\E\left[\mathcal{H}\left(I_{\bar{t}}\left(\G^{\tilde{t}+1}\right)\right)-\mathcal{H}\left(I_{\bar{t}}\left(\G^{\tilde{t}}\right)\right)\big|\xb_{\tilde{t}}\right]\right]\nonumber\\
&=\E\left[\E\left[\mathcal{H}\left(J_{\bar{t}-1}+\pil_{\tilde{t}}^{U(r^*)}\left(\xb_{\tilde{t}}\right)\right)-\mathcal{H}(J_{\bar{t}-1}+r^*)|\xb_{\tilde{t}}\right]\right]\nonumber\\
&=\E\left[\min_{q\ge 0}\left(\E\left[\mathcal{H}\left(J_{\bar{t}-1}+q\right)-\mathcal{H}\left(J_{\bar{t}-1}+r^*\right)|\xb_{\tilde{t}}\right]\right)\right]\nonumber\\
&=\E\left[\min_{q\in \R}\left(\E\left[\mathcal{H}\left(J_{\bar{t}-1}+q\right)-\mathcal{H}\left(J_{\bar{t}-1}+r^*\right)|\xb_{\tilde{t}}\right]\right)\right]\nonumber\\
&=-\E\left[\E\left[ a_1 \left(r^*-\pil_{\tilde{t}}^{U(r^*)}\left(\xb_{\tilde{t}}\right)\right)^2\middle|\xb_{\tilde{t}}\right]\right]\nonumber\\
& =-a_1\E\left[  \left(r^*-\pil_{\tilde{t}}^{U(r^*)}(\xb_{\tilde{t}})\right)^2\right] \label{eq:boundingthecostdifference}
\end{align}
For the third equality, we use Lemma~\ref{lem:pilisimproving}. For the fourth equality, observe that $\xb_0=0$, and hence  $\E[J_{\bar{t}-1}|\xb_{\tilde{t}}]\le U(r^*)$  (cf. Lemma~\ref{lemma:Attainment}), which implies that the minimum over $q\in \R$ is attained by an element $q\ge0$. 
For the fifth equality, we substitute equation \eqref{eq:EHq} and cancel terms. 

With this, we obtain:
\begin{align}
 \E\big[c[\tau,T](\pil^{U(r^*)})-c[\tau,T](\cop^{r^*})\big]&= \E\big[c[\tau,T](\G^{T+1-\tau})-c[\tau,T](\G^0)\big] \nonumber\\ 
&=\sum_{\tilde{t}=0}^{T-\tau} \E\big[c[\tau,T](\G^{\tilde{t}+1})-c[\tau,T](\G^{\tilde{t}})\big]\nonumber\\
&=\sum_{\tilde{t}=0}^{T-\tau}\E[\mathcal{H}(I_{\bar{t}}(\G^{\tilde{t}+1}))-\mathcal{H}(I_{\bar{t}}(\G^{\tilde{t}}))-\mathcal{H}(I_{T+1}(\G^{\tilde{t}+1}))+\mathcal{H}(I_{T+1}(\G^{\tilde{t}}))]\nonumber\\
&= \E[I_{T+1}(\G^0)] -\E[I_{T+1}(\G^{T+1-\tau})] +\sum_{\tilde{t}=0}^{T-\tau} \E\left[\mathcal{H}\left(I_{\bar{t}}(\G^{\tilde{t}+1})\right)-\mathcal{H}\left(I_{\bar{t}}(\G^{\tilde{t}})\right)\right] \nonumber\\
&= \E[I_{T+1}(\cop^{r^*})]-\E[I_{T+1}(\pil^{U(r^*)})]-a_1 \sum_{\tilde{t}=0}^{T-\tau} \E\left[  \left(r^*-\pil_{\tilde{t}}^{U(r^*)}(\xb_{\tilde{t}})\right)^2\right] \label{eq:finitehorizoncostdifference}
\end{align}
Here, the first equality holds by definition of $\G^{\tilde{t}}$. The second equality follows by expressing the difference as a telescoping sum. The third equality uses \eqref{eq:costdeltaidentity}. For the fourth equality, we rearrange and cancel terms. The final equality holds by definition of $\G^{\tilde{t}}$, and by \eqref{eq:boundingthecostdifference}. 

Using the definition of the cost-rate, we find 
\begin{align*}
C(\cop^{r^*})&= \lim_{T\to \infty} \E\left[\frac{1}{T-\tau+1}c[\tau,T](\cop^{r^*}) \right]\\
&=\lim_{T\to \infty}\E \left[\frac{1}{T-\tau+1}\left(c[\tau,T](\pil^{U(r^*)}) +a_1 \sum_{\tilde{t}=0}^{T-\tau}   \left(r^*-\pil_{\tilde{t}}^{U(r^*)}(\xb_{\tilde{t}})\right)^2\right) \right]\\
&=\limsup_{T\to \infty}\E \left[\frac{1}{T-\tau+1}c[\tau,T](\pil^{U(r^*)})  \right]+
a_1\liminf_{T\to \infty} \E\left[\frac{1}{T-\tau+1}\sum_{\tilde{t}=0}^{T-\tau}   \left(r^*-\pil_{\tilde{t}}^{U(r^*)}(\xb_{\tilde{t}})\right)^2\right]
\end{align*}
The first equality uses that for the constant order policy, the sequence converges such that the limit superior equals the limit. For the second equality, we use \eqref{eq:finitehorizoncostdifference} and $\lim_{T\to \infty}\frac{1}{T-\tau+1} (\E[I_{T+1}(\cop^{r^*})]-\E[I_{T+1}(\pil^{U(r^*)})])=0$. That this limit goes to zero follows because the steady state inventory level under the constant order policy exists (with finite mean), and because the inventory level under the PIL policy is bounded from below by $0$ and from above by $U(r^*)+\tau\mu$. This upper bound holds because $\pil^{U(r^*)}_t(\xb_t) \le \left(U(r^*)+\tau\mu-I_t -q[t+1,t+\tau-1]  \right)^+$, i.e. the inventory position after placing an order $I_t+q[t+1,t+\tau-1]+q_{\tau}$ cannot rise above $U(r^*)+\tau\mu$, (and it is below this bound initially since $\xb_0=\mathbf{0}$ by assumption). Thus the inventory level is bounded by $U(r^*)+\tau\mu$.

The third equality now follows because all limit points of the sequence $\left(\E\left[\frac{1}{T-\tau+1}\sum_{\tilde{t}=0}^{T-\tau}   \left(r^*-\pil_{\tilde{t}}^{U(r^*)}(\xb_{\tilde{t}})\right)^2\right]\right)_{T= \tau,\tau+1,\ldots}$ must be finite since $\pil_{\tilde{t}}^{U(r^*)}(\xb_{\tilde{t}})$ is bounded below by $0$ and above by $U(r^*)+\tau\mu$. Now note that for each limit point $x\in\R$ of that sequence, there must exist a subsequence that converges to that limit point. By the second equality above, the corresponding subsequence of $\left(\E \left[\frac{1}{T-\tau+1}c[\tau,T](\pil^{U(r^*)})\right]\right)_{T= \tau,\tau+1,\ldots}$ must converge to $y=C(\cop^{r^*})-x$. Thus every limit point $x$ of the former sequence must correspond to a limit point $y=C(\cop^{r^*})-x$ of the latter sequence, and vice versa. Also, the smallest limit point of the latter sequence must correspond to the largest limit point of the former sequence, which implies the third equality.

We thus find:
\begin{equation}
\label{eq:QuadDivergence}
C(\cop^{r^*})-C(\pil^{U(r^*)}) = a_1\liminf_{T\to \infty} \frac{1}{T-\tau+1} \sum_{\tilde{t}=0}^{T-\tau} \E\left[  \left(r^*-\pil_{\tilde{t}}^{U(r^*)}(\xb_{\tilde{t}})\right)^2\right]\ge 0. 
\end{equation}
This completes the proof.
 \Halmos
\end{proof}
It is noteworthy that the difference between the cost of a PIL policy and a constant order policy can be expressed as a function of the quadratic differences between the order decisions of both policies; see \eqref{eq:QuadDivergence}. The amounts by which the decisions of the PIL policy differ from a constant order policy also express how much better it performs.

\subsection{Asymptotic optimality as $\tau\to\infty$}  

With the results of \cite{xin2016optimality}, Theorem~\ref{thm:PilOutperformsCOP} establishes asymptotic optimality of the PIL policy as $\tau$ grows large:
\begin{theorem}
The PIL policy is asymptotically optimal for long lead-times when demand has an exponential distribution:
\[
\lim_{\tau\to\infty}\left(C(\pil^{U(r^*)})-C^{*}\right) = 0.
\]
\end{theorem}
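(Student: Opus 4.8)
The plan is to prove the statement by a squeeze (sandwich) argument combining Theorem~\ref{thm:PilOutperformsCOP} with the already-established asymptotic optimality of constant order policies for long lead times. First I would record the chain of inequalities
\[
C^* \;\le\; C(\pil^{U(r^*)}) \;\le\; C(\cop^{r^*}),
\]
valid for every $\tau\in\N_0$: the left inequality holds because $C^*$ is by definition the smallest achievable cost-rate, and the right inequality is precisely (part of) Theorem~\ref{thm:PilOutperformsCOP}. Hence $0\le C(\pil^{U(r^*)})-C^* \le C(\cop^{r^*})-C^*$ for every $\tau$.

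Next I would invoke the asymptotic optimality of the constant order policy for long lead times, established by \cite{Goldbergetal2016} and sharpened (with exponential rate of convergence) by \cite{xin2016optimality}, which yields $\lim_{\tau\to\infty}\big(C(\cop^{r^*})-C^*\big)=0$. If one prefers to cite the multiplicative form $\limsup_{\tau\to\infty} C(\cop^{r^*})/C^* \le 1$, it converts to the additive form needed here because $C^*$ is uniformly bounded in $\tau$: indeed, by Lemma~\ref{lem:bias} the cost-rate $C(\cop^{r^*})=\min_{r<\mu}\big[p(\mu-r)+h r^2/(2(\mu-r))\big]$ of the optimal constant order policy does not depend on $\tau$, and $C^*\le C(\cop^{r^*})$, so $C^*$ is bounded above uniformly in $\tau$ (and below by $0$).

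Finally I would let $\tau\to\infty$ in $0\le C(\pil^{U(r^*)})-C^*\le C(\cop^{r^*})-C^*$ and conclude $\lim_{\tau\to\infty}\big(C(\pil^{U(r^*)})-C^*\big)=0$ by the squeeze theorem; the same bound with $C(\pil^{U^*})\le C(\pil^{U(r^*)})$ shows the best PIL level is also asymptotically optimal. The argument is essentially bookkeeping: the substantive content was already carried by Theorem~\ref{thm:PilOutperformsCOP} (dominance of PIL over COP for every finite $\tau$) and by the cited constant-order results. The only point requiring a little care is matching the form (multiplicative versus additive) of the cited constant-order asymptotic optimality statement to what is used, which is handled by the uniform-in-$\tau$ bound on $C^*$ noted above; beyond this there is no real obstacle.
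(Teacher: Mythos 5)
Your proposal is correct and is essentially the paper's own argument: the paper proves this theorem in one line by combining Theorem~\ref{thm:PilOutperformsCOP} (which gives $C^*\le C(\pil^{U(r^*)})\le C(\cop^{r^*})$) with Theorem~1 of \cite{xin2016optimality} on the asymptotic optimality of the constant order policy, exactly the squeeze you describe. Your extra remarks on converting the multiplicative to the additive form and on $C(\cop^{r^*})$ being independent of $\tau$ are sound bookkeeping that the paper leaves implicit.
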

This result follows directly from our Theorem~\ref{thm:PilOutperformsCOP}, and Theorem~1 in \cite{xin2016optimality}. In fact, it follows from these theorems that the optimality gap of PIL policy decays exponentially in $\tau$. 

\section{Penalty cost asymptotics}\label{sec:penaltycostsasymptotics}
The performance of the PIL-policy in the asymptotic regime that $p\to\infty$ will be studied
by using bounds in terms of the related inventory system in which demand in excess of inventory
is back-ordered rather than lost. Therefore, in this section, we first make a comparison to
the related canonical back-order system in Section \ref{sec:Comparison} and show that the cost of a PIL policy
for the lost sales system has lower cost than the optimal policy for the canonical back-order system. Then we provide our main result that the PIL policy is asymptotically optimal as the cost of losing a sale approaches infinity in Section \ref{sec:optimalityLargeP}.

\subsection{Comparison to back-order system}
\label{sec:Comparison}

The back-order system with lead-time $\tau\in\N_0$, holding cost
parameter $h>0$ per period per item, and back-order cost $p>0$ per period per item is much better
understood than the same system with lost sales. Comparison between these two systems has been studied before
by e.g. \cite{janakiramanetal2007},
\cite{Huhetal2009MS}, and \cite{Bijvanketal2015} and we
mostly follow their notational conventions. 
The dynamics for the back-order system are:
\begin{align}
I_{t}^\B=I_{t-1}^\B-D_{t-1}+q^\B_{t}, \quad J^\mathcal{B}_t=(I_t^\mathcal{B}-D_t)^+, \quad
B_{t}=(D_t-I_t^\B)^+,
\end{align}
where $B_t$ and $I_t^\B$ denote respectively the number of items on back-order and the inventory level in period $t$. We use the superscript $\B$ to denote that quantities belong to the back-order system
(rather than the lost sales system) and generally denote this system as $\B$.
It is well known that the optimal policy for $\B$ is a base-stock policy. Under this policy the order in each period $t$
is placed to raise the inventory position to a fixed base-stock level $S$:
\begin{equation}
q^\B_{t+\tau}=S-I^\B_t-q^\B[t+1,t+\tau-1],
\end{equation}
and the optimal base-stock level for system $\B$ is given by the newsvendor equation:
\begin{equation}
S^* = \inf\left\{S: \P(D[0,\tau]\leq S)\geq \frac{p}{p+h} \right\}.
\end{equation}
The optimal average cost-rate for this system satisfies:
\begin{equation}
C^{\B *}=h\E\left[(S^*-D[0,\tau])^+\right]+p\E\left[(D[0,\tau]-S^*)^+\right].
\end{equation}
For the lost sales system we similarly define $c^\mathcal{B}_t:=hJ^\mathcal{B}_t+p B_t$ and
$c^{\mathcal{B}}[a,b]:=\sum_{t=a}^b c^{\mathcal{B}}_t$. We also write $c^\mathcal{B}[a,b](\pol)$ to make the dependence on the control policy $\pol$ explicit. With this notation we can express the cost-rate of a policy $\pol$ in system $\mathcal{B}$ as $C^\mathcal{B}(\pol)=\limsup_{T\to\infty}\E\left[\frac{1}{T-\tau+1} c^{\mathcal{B}}[\tau,T](\pol)\right]$.
The lost sales system described in Section \ref{sec:model} is denoted by $\L$, and the optimal cost for this
system is still denoted by $C^{*}$. 
Note that both systems are defined on the same probability space induced by the initial state and 
demand sequence. The main result of \cite{janakiramanetal2007} is that $C^{*}\leq C^{\B*}$, which is established via an ingenious stochastic comparison technique. The main result of this section is
that the best PIL-policy for $\L$ achieves lower cost than the optimal policy for $\B$: $C(\pil^{U^*})\leq C^{\B*}$. Via $C^{*}\leq C(\pil^{U^*})\leq C^{\B*}$, this result constitutes the first \emph{constructive} proof of the main result of \cite{janakiramanetal2007}: Unlike their stochastic comparison proof, we identify a specific (PIL) policy that yields a cost-rate for system $\L$ that is lower than the optimal cost-rate for system $\B$. In addition to this, this result also enables us to leverage results in \cite{Huhetal2009MS} to show (under mild conditions) that the PIL-policy is asymptotically optimal as $p$ grows large. 

The main idea behind the proof of this result is that the base-stock policy with base-stock level $S>\tau\mu$ in $\B$ is
also a PIL-policy with projected inventory $S-\tau\mu$ in system $\B$. Indeed observe that
\[
I^\B_{t+\tau}=I^\B_t+q[t+1,t+\tau]-D[t,t+\tau-1]=S-D[t,t+\tau-1],
\]
so that $\E[I^\B_{t+\tau}]=S-\tau\mu$. From this it immediately follows that $C^\mathcal{B}\left(\pil^{S^*-\tau\mu}\right)=C^{\mathcal{B}}\left(\base^{S^*}\right)=C^\mathcal{B^*}$ when
$S^*\geq\tau\mu$.

We will first show that $\E[L_t]\leq \E[B_t]$
for any period $t\geq \tau$ when $\mathcal{L}$ and $\mathcal{B}$ operate under the same PIL-policy with level $U\geq 0$. From that, we conclude that the cost-rate of $\mathcal{L}$ under the optimal
PIL policy is smaller than the optimal cost-rate for system $\mathcal{B}$.

The following technical lemma is needed in subsequent results. Its proof is in Appendix \ref{sec:ivosproof}.
\begin{lemma}
\label{lem:IvosLemma}
Let $X$ and $Y$ be random variables with joint distribution function $F(x,y)$, $-\infty<x,y<\infty$.
Then $\E[(X+Y)^+]=\int_{-\infty}^\infty \P(X\geq z, Y\geq -z) dz$.
\end{lemma}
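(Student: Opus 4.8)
The plan is to obtain the formula from a pointwise identity followed by Tonelli's theorem, so that no joint-distribution computation is actually needed. First I would observe that for every fixed realization of $(X,Y)$ the set $\{z\in\R : -Y\le z\le X\}$ is either empty (when $X<-Y$) or the closed interval $[-Y,X]$ (when $X\ge -Y$), and hence its Lebesgue measure equals $(X-(-Y))^+=(X+Y)^+$. Since the indicator of this set is exactly $\ind\{X\ge z\}\,\ind\{Y\ge -z\}$, this gives the pointwise identity
\[
(X+Y)^+ \;=\; \int_{-\infty}^\infty \ind\{X\ge z,\;Y\ge -z\}\,dz,
\]
valid for every sample point, with both sides possibly equal to $+\infty$.

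Second, I would take expectations of both sides. The map $(z,\omega)\mapsto \ind\{X(\omega)\ge z,\;Y(\omega)\ge -z\}$ is nonnegative and jointly measurable on $\R\times\Omega$ — it is the indicator of $\{(z,\omega): X(\omega)-z\ge 0,\ Y(\omega)+z\ge 0\}$, which is measurable because $X$ and $Y$ are random variables and the maps are continuous (indeed affine) in $z$. Tonelli's theorem therefore allows interchanging the Lebesgue integral in $z$ with the expectation:
\[
\E\big[(X+Y)^+\big] \;=\; \int_{-\infty}^\infty \E\big[\ind\{X\ge z,\;Y\ge -z\}\big]\,dz \;=\; \int_{-\infty}^\infty \P(X\ge z,\;Y\ge -z)\,dz,
\]
which is the claimed identity.

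There is essentially no obstacle beyond spotting the pointwise identity in the first step; the only items to check are the joint measurability needed to apply Tonelli, which is immediate, and the harmless observation that replacing the weak inequalities $X\ge z$, $Y\ge -z$ by strict ones alters the integrand, for each fixed $\omega$, only on a $z$-set of Lebesgue measure zero, so the statement is insensitive to that convention.
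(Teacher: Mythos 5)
Your proposal is correct and is essentially the paper's own argument: the paper writes $x+y=\int_{z=-y}^{x}dz$ on the event $x+y\ge 0$ (your interval-length identity) and then swaps the order of integration against $dF(x,y)$, which is exactly your application of Tonelli phrased via the joint distribution rather than the underlying probability space. Your write-up is slightly more careful about measurability and the null-set issue with weak versus strict inequalities, but there is no substantive difference in approach.
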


Define the random variables 
\[
Y= L[0,\tau-1] - \E[L[0,\tau-1]], \qquad\mbox{and}\qquad X=D_{\tau}-I_{\tau}=D[0,\tau]-S-Y,
\]
for $S\geq\tau\mu$.
Observe that $X^+=L_{\tau}$ and that $X+Y=D[0,\tau]-S$ such that $(X+Y)^+=B_{\tau}$ 
under a PIL-policy with level $U=S-\tau\mu\geq0$.
Our aim will be to prove that $\E[L_{\tau}]\leq \E[B_{\tau}]$. To this end, 
we first prove that $X$ and $Y$ are \emph{associated} random variables (cf. \cite{Esaryetal1967}).

\begin{definition}
The random variables $(A_1,\ldots,A_n)=\mathbf{A}$ are said to be associated if
\[
\mbox{Cov}[f(\mathbf{A}),g(\mathbf{A})]\geq 0
\]
for all non-decreasing functions $f,g:\R^n \to\R$ for which the covariance above exists.
\end{definition}

\begin{lemma}
$X$ and $Y$ are associated random variables.
\end{lemma}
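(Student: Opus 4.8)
The plan is to exhibit $X$ and $Y$ as non-decreasing functions of the independent demand vector $(D_0,\ldots,D_\tau)$ and then invoke the closure properties of association (cf.\ \cite{Esaryetal1967}): a family of independent random variables is associated, and non-decreasing functions of associated random variables are associated.

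First I would record the (deterministic) transient behaviour of $\L$ under $\pil^U$ up to time $\tau$. Since $\xb_0=\mathbf{0}$ we have $I_0=0$ and $q_t=0$ for $1\le t\le\tau-1$, so \eqref{eq:dynamics3} gives $I_t=0$ for all $t\le\tau-1$ by induction; hence $L_t=(D_t-I_t)^+=D_t$ for $0\le t\le\tau-1$ and therefore $Y=L[0,\tau-1]-\E[L[0,\tau-1]]=D[0,\tau-1]-\tau\mu$. Moreover the first order $q_\tau=\pil^U_0(\mathbf{0})=(U-\E[J_{\tau-1}\mid\xb_0])^+$ is a deterministic constant: indeed $J_{\tau-1}=(I_{\tau-1}-D_{\tau-1})^+=0$, so $\E[J_{\tau-1}\mid\xb_0]=0$ and, using $U\ge0$, $q_\tau=U$. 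Consequently $I_\tau=J_{\tau-1}+q_\tau=U$ is deterministic and $X=D_\tau-I_\tau=D_\tau-U$.

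With this structural fact in hand the conclusion is immediate: $Y$ is a non-decreasing function of $(D_0,\ldots,D_{\tau-1})$ that does not involve $D_\tau$, and $X$ is a non-decreasing function of $D_\tau$ that does not involve $D_0,\ldots,D_{\tau-1}$; regarded as functions of $(D_0,\ldots,D_\tau)$, both are non-decreasing. The components $D_0,\ldots,D_\tau$ are independent, hence associated, so $X$ and $Y$ are associated by closure under non-decreasing maps. (Equivalently: $X$ and $Y$ are independent, each is trivially associated as a single variable, and a union of mutually independent associated families is associated.)

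I expect no genuine analytic obstacle: the only thing to get right is the bookkeeping in the second paragraph — that $\xb_0=\mathbf{0}$ forces $I_t\equiv 0$ for $t<\tau$ and makes $q_\tau$ a constant — after which the statement is a one-line application of the association calculus. I would also note, for possible later use, that the argument does not really rely on $\xb_0=\mathbf{0}$: for any \emph{deterministic} initial state $\xb_0$, the order $q_\tau=\pil^U_0(\xb_0)$ is still a constant, $L[0,\tau-1]$ is still a non-decreasing function of $(D_0,\ldots,D_{\tau-1})$ and $I_\tau$ a non-increasing one (by monotonicity of \eqref{eq:dynamics2}--\eqref{eq:dynamics3} in each $D_t$), so $X=D_\tau-I_\tau$ and $Y$ are again non-decreasing functions of $(D_0,\ldots,D_\tau)$ and the same conclusion follows.
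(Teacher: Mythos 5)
Your argument is correct, but its structure is inverted: the real proof is the remark you relegate to the final sentence, while the main body establishes only a degenerate special case that is too weak for the role this lemma plays in the paper. Under $\xb_0=\mathbf{0}$ you correctly find $Y=D[0,\tau-1]-\tau\mu$ and $X=D_\tau-U$, so $X$ and $Y$ are independent and association is immediate — but the lemma is invoked inside the proof of Lemma \ref{lem:LleqBstrong} to conclude $\E[B_t]\ge\E[L_t]$ for \emph{every} $t\ge\tau$, which is done by conditioning on the (arbitrary, nonzero) pipeline $\xb_{t-\tau}$ and applying the period-$\tau$ argument from that state. For a general pipeline, $I_\tau$ genuinely depends on $D_0,\ldots,D_{\tau-1}$ through the lost-sales terms, $X$ and $Y$ are \emph{not} independent, and association is doing real work. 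So the version of the statement the paper needs is exactly your closing aside: for any deterministic $\xb_0$ from which $U$ is attainable, $q_\tau=\pil^U_0(\xb_0)$ is a constant, $L[0,\tau-1]$ is componentwise non-decreasing and $I_\tau$ componentwise non-increasing in $(D_0,\ldots,D_\tau)$ by the monotonicity of \eqref{eq:dynamics2}--\eqref{eq:dynamics3}, hence $X=D_\tau-I_\tau$ and $Y$ are non-decreasing functions of the independent (hence associated) vector $\mathbf{D}$ and are associated by closure under non-decreasing maps. Promote that remark to the main argument and the proof is complete.

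Once reorganized this way, your route is essentially the paper's: both rest on Theorem 2.1 and property $P_4$ of \cite{Esaryetal1967} applied to $(D_0,\ldots,D_\tau)$. The only cosmetic difference is that the paper verifies monotonicity via the explicit representation \eqref{eq:Lsum} of $L[0,t]$ as a running maximum (which it reuses later, e.g.\ in the proof of Lemma \ref{lem:monotonousinU}), whereas you argue monotonicity directly from the recursions; either is fine for this lemma in isolation.
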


\begin{proof}{Proof.}
The random variables $\mathbf{D}=(D_0,\ldots,D_{\tau})$ are associated by Theorem 2.1 of \cite{Esaryetal1967}.
By property $P_4$ of \cite{Esaryetal1967} it suffices to show that $Y=f(\mathbf{D})$ and $X=g(\mathbf{D})$ are
non-decreasing functions (element wise).

We derive an expression for $L[0,t]$: The cumulative demands lost until $t$. Each arriving demand $D[0,t]$ until $t$ is either satisfied from inventory or lost. When $L_t>0$, the cumulative amount satisfied must equal the cumulative available inventory $I_0+q[1,t]$, hence $D[0,t]=L[0,t]+(I_0+q[1,t])\Rightarrow  L[0,t]=D[0,t]-I_0-q[1,t]$. When $L_t=0$, we have $L[0,t]=L[0,t-1]$, and hence in general we find:
\begin{equation}
\label{eq:Lsum}
L[0,t] = \max_{k\in\{0,\ldots,t\}} \left( D[0,k]  - I_0 - q[1,k]  \right)^+.
\end{equation}
(To see this, note that the maximum is attained for the $k$ which corresponds to the period with the last stockout until $t$.) Next for $Y=f(\mathbf{D})$ we have
\begin{equation}
Y = L[0,\tau-1]- \E[L[0,\tau-1]] = \max_{k\in\{0,\ldots,\tau-1\}} \left( D[0,k]-I_0-q[1,k] \right)^+ - \E[L[0,\tau-1]],
\end{equation}
which is clearly non-decreasing in each $D_i$, $i\in\{1,\ldots,\tau\}$. (Note that $Y$ is independent of, and thus non-decreasing in $D_\tau$.) 
Finally observe that
\begin{equation}
\label{eq:Xredef}
X=g(\mathbf{D})=D[0,\tau]-S-\max_{k\in\{0,\ldots,\tau-1\}} \left( D[0,k]-I_0-q[1,k] \right)^+ + \E[L[0,\tau-1]].
\end{equation}
Note that $dX/dD_{\tau} =1$ and $dX/dD_i \in \{0,1\}$ for $i\in\{0,\ldots,\tau-1\}$, thus $g$ is non-decreasing. \Halmos
\end{proof}

\begin{lemma}
\label{lem:LleqBstrong}
If system $\mathcal{B}$ and $\mathcal{L}$ both operate under a PIL policy with
level $U\geq 0$, then $\E[B_t]\geq \E[L_t]$ for any period $t\geq \tau$.
\end{lemma}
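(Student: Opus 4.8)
The plan is to reduce the statement, via the PIL dynamics, to the elementary fact that \emph{whenever $A$ and $C$ are associated random variables with $\E[C]=0$ we have $\E[(A+C)^+]\ge\E[A^+]$}. I would prove this fact first. Elementarily, $(A+C)^+\ge A^+ + \mathbf 1\{A>0\}\,C$ pointwise (check the cases $A>0$ and $A\le 0$ separately). Taking expectations and using $\E[C]=0$,
\[
\E[(A+C)^+]-\E[A^+]\ \ge\ \E[\mathbf 1\{A>0\}\,C]\ =\ \mbox{Cov}\big(\mathbf 1\{A>0\},C\big)\ \ge\ 0 ,
\]
the last inequality because $\mathbf 1\{A>0\}$ and $C$ are non-decreasing functions of $(A,C)$ and $(A,C)$ is associated. (Alternatively one may route through Lemma~\ref{lem:IvosLemma}: association gives $\E[(A+C)^+]=\int\P(A\ge z,C\ge -z)\,dz\ge\int\P(A\ge z)\P(C\ge -z)\,dz$, and by Lemma~\ref{lem:IvosLemma} again the right-hand side equals $\E[(A+C')^+]$ for an independent copy $C'$ of $C$, which is $\ge\E[(A+\E[C'])^+]=\E[A^+]$ by Jensen conditionally on $A$.)

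For $t=\tau$ the lemma is then immediate: the random variables $X$ and $Y$ defined above satisfy $X^+=L_\tau$ and $(X+Y)^+=B_\tau$ under a PIL policy with level $U=S-\tau\mu\ge 0$, they are associated by the preceding lemma, and $\E[Y]=\E[L[0,\tau-1]]-\E[L[0,\tau-1]]=0$; apply the elementary fact with $A=X$, $C=Y$.

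For general $t\ge\tau$ I would rerun the argument one block at a time. Condition on $\xb_{t-\tau}$ and set $Y_t:=L[t-\tau,t-1]-\E[L[t-\tau,t-1]\mid\xb_{t-\tau}]$ and $X_t:=D_t-I_t$. Since Lemma~\ref{lemma:Attainment} guarantees that the PIL order placed in period $t-\tau$ is actually attained, expanding \eqref{eq:dynamics1}--\eqref{eq:dynamics3} gives $I_t=S-D[t-\tau,t-1]+Y_t$ with $S=U+\tau\mu$, hence $X_t=D[t-\tau,t]-S-Y_t$, so that $X_t^+=L_t$ and $(X_t+Y_t)^+=(D[t-\tau,t]-S)^+=B_t$; the last equality holds because in system $\mathcal B$ the PIL policy with level $U\ge 0$ is the base-stock policy with level $S$ (as noted above) and both systems are driven by the same demands. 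Moreover $\E[Y_t\mid\xb_{t-\tau}]=0$, and by the time-shifted form of \eqref{eq:Lsum}, conditional on $\xb_{t-\tau}$ both $Y_t$ and $X_t$ are non-decreasing functions of $(D_{t-\tau},\ldots,D_t)$ (for $X_t$, because the satisfied demand $D[t-\tau,t-1]-L[t-\tau,t-1]$ has partial derivatives in $\{0,1\}$ while $D_t$ enters with coefficient $1$, exactly as in the previous lemma's proof). These demands are associated (Theorem~2.1 of \cite{Esaryetal1967}) and association is preserved under non-decreasing maps (property $P_4$ of \cite{Esaryetal1967}), so $(X_t,Y_t)$ is associated given $\xb_{t-\tau}$; applying the elementary fact conditionally and taking expectations over $\xb_{t-\tau}$ yields $\E[B_t]\ge\E[L_t]$.

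The abstract inequality is routine once the pointwise bound is spotted. The main obstacle is the general-$t$ extension: the preceding association lemma is stated only for the block starting at the deterministic state $\xb_0=\mathbf 0$, so one must re-derive the clean representation $I_t=S-D[t-\tau,t-1]+Y_t$ — which requires that the $(\cdot)^+$ in the PIL order does not bind, precisely the content of Lemma~\ref{lemma:Attainment} — and verify that the association argument carries over conditionally on the now-random pipeline $\xb_{t-\tau}$.
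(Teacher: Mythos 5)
Your proof is correct, and its overall architecture coincides with the paper's: reduce the claim for general $t\ge\tau$ to a single ``block'' of $\tau+1$ periods via Lemma~\ref{lemma:Attainment}, write $L_t=X^+$ and $B_t=(X+Y)^+$ with $\E[Y]=0$, and exploit the association of $X$ and $Y$. (The paper compresses the general-$t$ step into one sentence --- ``using only that $q_\tau$ can be placed to attain $U$'' --- whereas you carry out the conditioning on $\xb_{t-\tau}$ explicitly; your version is the more careful one, but it is the same idea.) Where you genuinely diverge is in how the key inequality $\E[(X+Y)^+]\ge\E[X^+]$ is established. The paper's chain is: Lemma~\ref{lem:IvosLemma} to write $\E[(X+Y)^+]=\int\P(X\ge z,\,Y\ge -z)\,dz$, Theorem~5.1 of \cite{Esaryetal1967} to decouple the joint probability, Lemma~\ref{lem:IvosLemma} again to reassemble the bound as $\E[(\tilde X+\tilde Y)^+]$ for independent copies, and finally Jensen's inequality conditionally on $\tilde X$. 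Your primary route replaces all of this with the pointwise bound $(A+C)^+\ge A^+ + \ind\{A>0\}\,C$ and the observation that $\E[\ind\{A>0\}\,C]=\mbox{Cov}(\ind\{A>0\},C)\ge 0$ directly from the definition of association (the covariance exists since the indicator is bounded and $C$ is integrable). This is shorter, needs neither Lemma~\ref{lem:IvosLemma} nor the quadrant-dependence theorem nor the independent-copy construction, and isolates the probabilistic content in a clean one-line fact; what it gives up is the explicit intermediate quantity $\E[(\tilde X+\tilde Y)^+]$, which the paper's route exhibits as a natural ``decoupled'' system sitting between the lost-sales and back-order costs. Your parenthetical alternative is precisely the paper's argument.
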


\begin{proof}{Proof.}
We will show that $\E[B_{\tau}]\geq \E[L_{\tau}]$ using only that $q_\tau$ can be placed to attain $U\geq0$. By Lemma \ref{lemma:Attainment}
this implies the result.

Let $\tilde{X}$ and $\tilde{Y}$ be two {\em independent} random variables with the same marginal distribution as $X$ and $Y$ respectively.
Observe that
\begin{align}
\E[B_{\tau}] &= \E\left[(X+Y)^+\right] \notag\\
\label{eq:firststep}
& = \int_{z=-\infty}^\infty \P(X\geq z, Y\geq -z) dz \\
\label{eq:esarystep}
& \geq \int_{z=-\infty}^\infty \P(X\geq z) \P(Y\geq -z) dz \\
& = \int_{z=-\infty}^\infty \P(\tilde{X}\geq z) \P(\tilde{Y}\geq -z) dz = \E\left[(\tilde{X}+\tilde{Y})^+\right]\notag,
\end{align}
where \eqref{eq:firststep} follows from Lemma \ref{lem:IvosLemma} and \eqref{eq:esarystep} follows from Theorem 5.1 of \cite{Esaryetal1967}.
Now continuing and using that $\tilde{X}$ and $\tilde{Y}$ are independent, we find 
\begin{align}
 \E\left[(\tilde{X}+\tilde{Y})^+\right] \ge \E\left[(\tilde{X}+\E[\tilde{Y}])^+\right] =  \E[\tilde{X}^+]
 \end{align}
where the inequality follows from Jensen's inequality and the independence between $\tilde{X}$ and $\tilde{Y}$, and the equality holds since $\E[\tilde{Y}]=0$. Note $\E[\tilde{X}^+]=\E[X^+]=\E[L_t]$.  \Halmos
\end{proof}

\begin{lemma}
\label{lemma:costdominate}
If system $\mathcal{B}$ and $\mathcal{L}$ both operate under a PIL policy with
level $U\geq 0$, then for any initial state $\xb$ such that projected inventory $U\geq0$ can be attained, we have $p\E[L_{t}]+h\E[J_{t}]=\E[c_t]\leq \E[c^{\mathcal{B}}_t]=p\E[B_t]+h\E[J_t^\mathcal{B}]$ for any $t\geq\tau$.
\end{lemma}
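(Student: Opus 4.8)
The plan is to establish the per-period cost comparison $\E[c_t]\leq\E[c_t^{\B}]$ for $t\geq\tau$ by combining the already-proven lost-sales inequality $\E[L_t]\leq\E[B_t]$ from Lemma~\ref{lem:LleqBstrong} with a matching inequality on expected on-hand inventory, namely $\E[J_t]\leq\E[J_t^{\B}]$. Since $c_t=hJ_t+pL_t$ and $c_t^{\B}=hJ_t^{\B}+pB_t$ with $h,p>0$, the claim follows termwise once both inventory and shortage inequalities are in hand. So the real content is the inventory inequality.

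For the inventory inequality I would exploit a conservation-of-flow identity. Under a PIL policy with level $U$ in either system, the expected inventory at the order-arrival epoch is pinned to $U$: in system $\B$ we have $\E[I_{t+\tau}^{\B}\mid\xb_t]=U$ by construction (this is exactly the base-stock/PIL equivalence noted before Lemma~\ref{lem:IvosLemma}), and in system $\L$ the PIL policy also sets $\E[J_{t+\tau-1}\mid\xb_t]+q_{t+\tau}$ so that $\E[I_{t+\tau}\mid\xb_t]=U$ whenever the order is attainable (which holds for $t\geq 0$ by Lemma~\ref{lemma:Attainment}). Thus $\E[I_t]=\E[I_t^{\B}]=U$ for all $t\geq\tau$. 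Now decompose the one-period flow: $I_t=J_t+L_t-(D_t-\text{something})$... more precisely, in $\L$ we have $I_t-D_t=J_t-L_t$, so $J_t=I_t-D_t+L_t$, giving $\E[J_t]=U-\mu+\E[L_t]$; in $\B$ we have $I_t^{\B}-D_t=J_t^{\B}-B_t$, so $\E[J_t^{\B}]=U-\mu+\E[B_t]$. Therefore $\E[J_t^{\B}]-\E[J_t]=\E[B_t]-\E[L_t]\geq 0$ by Lemma~\ref{lem:LleqBstrong}. This is clean and requires no new machinery.

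With both $\E[J_t]\leq\E[J_t^{\B}]$ and $\E[L_t]\leq\E[B_t]$ established for $t\geq\tau$, multiply the first by $h>0$, the second by $p>0$, and add to get $\E[c_t]=h\E[J_t]+p\E[L_t]\leq h\E[J_t^{\B}]+p\E[B_t]=\E[c_t^{\B}]$, which is the assertion. I would also note explicitly that the hypothesis ``for any initial state $\xb$ such that projected inventory $U\geq0$ can be attained'' is exactly what Lemma~\ref{lemma:Attainment} propagates forward, so the PIL order is feasible (non-negative) in every period $t\geq 0$, and hence the identity $\E[I_t]=U$ is valid for all $t\geq\tau$ in both systems.

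The main obstacle, such as it is, is bookkeeping care with the flow identities and with the role of the attainability hypothesis: one must check that $J_t=(I_t-D_t)^+$ and $L_t=(D_t-I_t)^+$ combine \emph{exactly} (not just in expectation) as $J_t-L_t=I_t-D_t$, and likewise $J_t^{\B}-B_t=I_t^{\B}-D_t$, so that taking expectations is unproblematic; and that $\E[I_t]$ is genuinely equal to $U$ rather than merely bounded, which is where one leans on the PIL policy actually achieving its target (guaranteed by Lemma~\ref{lemma:Attainment} given the hypothesis on $\xb$). Everything else is immediate, so this lemma is essentially a corollary of Lemma~\ref{lem:LleqBstrong} plus a flow-balance argument.
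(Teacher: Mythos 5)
Your proof is correct and is essentially the paper's argument in a slightly more symmetric form: both reduce the claim to $\E[L_t]\leq\E[B_t]$ (Lemma~\ref{lem:LleqBstrong}) by applying the exact identity $J_t-L_t=I_t-D_t$ (equivalently $x=x^+-(-x)^+$) together with the fact that the PIL policy pins $\E[I_t]=\E[I_t^{\mathcal{B}}]=U$ for $t\geq\tau$, which is precisely the flow-balance computation the paper carries out via the explicit expression $I_\tau=S-D[0,\tau-1]+L[0,\tau-1]-\E[L[0,\tau-1]]$.
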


\begin{proof}{Proof.}
We will show that $\E[c_{\tau}]\leq \E[c^{\mathcal{B}}_{\tau}]$ using only that $q_\tau$ can be placed to attain $U\geq0$. By Lemma \ref{lemma:Attainment}
this implies the result.
The inventory level in $\mathcal{L}$ at the time of arrival of order $q_\tau$ is given by:
\begin{equation}
\label{eq:ItauL}
I_\tau = I_0 + q[1,\tau] - D[0,\tau-1] + L[0,\tau-1].
\end{equation}
Under a PIL-policy with projected inventory $S-\tau\E[D]$, system $\mathcal{L}$ will choose $q_\tau$ such that $\E[I_\tau]=S-\tau\E[D]$.
Using \eqref{eq:ItauL} and solving for $q_\tau$ yields that
\begin{equation}
\label{eq:qtauPIL}
q_\tau=S-I_0- q[1,\tau-1] - \E[L[0,\tau-1]].
\end{equation}
Substituting \eqref{eq:qtauPIL} back into \eqref{eq:ItauL} yields
\begin{equation}
\label{eq:ItauSimplified}
I_\tau = S- D[0,\tau-1] + L[0,\tau-1] - \E[L[0,\tau-1]].
\end{equation}
Now we have for the expected costs that will be incurred in period $\tau$ by system $\mathcal{L}$:
\begin{align}
\E[c_{\tau}] &= h \E[(I_\tau-D_\tau)^+] + p \E[(D_\tau - I_\tau)^+] \notag\\
&= h \E[I_\tau-D_\tau] + h \E[(D_\tau-I_\tau)^+] + p \E[(D_\tau - I_\tau)^+] \notag\\
&= h \E\left[S- D[0,\tau] + L[0,\tau-1] - \E(L[0,\tau-1]) \right] + h \E[L_\tau] + p \E[L_\tau] \notag\\
&= h \E\left[S- D[0,\tau] \right] + h \E[L_\tau] + p \E[L_\tau] \notag\\
&= h\E\left[\left(S- D[0,\tau]\right)^+\right] - h \E\left[\left(  D[0,\tau] - S\right)^+\right] + h \E[L_\tau] + p \E[L_\tau] \notag\\
&\leq h\E\left[\left(S- D[0,\tau]\right)^+\right] + p \E\left[\left(  D[0,\tau] - S\right)^+\right] = \E[c_\tau^\mathcal{B}]=C^{\mathcal{B}*}.
\end{align}
The second and fourth equality follow from using the identity $x=x^+ + (-x)^+$, and the inequality follows from applying Lemma \ref{lem:LleqBstrong} twice. \Halmos
\end{proof}

\begin{theorem}
\label{thm:LdomB}
If system $\B$ is controlled by the optimal base-stock policy, or \emph{equivalently} by a PIL policy with parameter  $S^*-\tau\mu$, and if $\L$ is controlled by a PIL-policy with PIL-level $(S^*-\tau\mu)^+$, then
the cost-rate of system $\L$ dominates the optimal cost-rate of system $\B$, that is 
\[
C(\pil^{U^*})\leq C(\pil^{(S^*-\tau\mu)^+})\leq C^{\mathcal{B}*}.
\]
\end{theorem}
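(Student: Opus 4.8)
The plan is to reduce the statement to Lemma~\ref{lemma:costdominate} together with the observation, recalled just before the theorem, that in system $\B$ a PIL policy with a \emph{non-negative} level is a base-stock policy. Since $U^*$ minimizes $C(\pil^{\cdot})$, we automatically have $C(\pil^{U^*})\le C(\pil^{(S^*-\tau\mu)^+})$, so the whole theorem reduces to showing $C(\pil^{(S^*-\tau\mu)^+})\le C^{\B*}$. I would split on the sign of $S^*-\tau\mu$.

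\emph{Case $S^*\ge\tau\mu$.} Here $(S^*-\tau\mu)^+=S^*-\tau\mu=:U\ge0$. From $\xb_0=\mathbf 0$ one has $\E[J_{\tau-1}\mid\xb_0]=0\le U$, so by Lemma~\ref{lemma:Attainment} the PIL policy $\pil^U$ is well-defined in $\L$ from period $0$ onwards, and the base-stock policy $\base^{S^*}$ (equivalently $\pil^U$) is well-defined in $\B$ from $\xb_0=\mathbf 0$ as well. Lemma~\ref{lemma:costdominate} then gives $\E[c_t(\pil^U)]\le\E[c^{\B}_t(\pil^U)]$ for every $t\ge\tau$. Summing over $t=\tau,\dots,T$, dividing by $T-\tau+1$, and taking $\limsup_{T\to\infty}$ of both sides (the inequality holds for each $T$, hence for the $\limsup$) yields $C(\pil^U)\le C^{\B}(\pil^U)$. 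By the recalled observation $C^{\B}(\pil^U)=C^{\B}(\base^{S^*})=C^{\B*}$, so $C(\pil^{(S^*-\tau\mu)^+})=C(\pil^U)\le C^{\B*}$.

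\emph{Case $S^*<\tau\mu$.} Now $(S^*-\tau\mu)^+=0$, and $\pil^0$ is trivial: $\pil^0_t(\xb_t)=(-\E[J_{t+\tau-1}\mid\xb_t])^+=0$ since $J_{t+\tau-1}\ge0$ a.s.; thus from $\xb_0=\mathbf 0$ the system never orders, $I_t=0$, $c_t=pD_t$ for all $t$, and $C(\pil^0)=p\mu$. On the other hand $C^{\B*}\ge p\,\E[(D[0,\tau]-S^*)^+]\ge p(\E[D[0,\tau]]-S^*)=p((\tau+1)\mu-S^*)>p\mu$, the last inequality using $S^*<\tau\mu$. Hence $C(\pil^{(S^*-\tau\mu)^+})=p\mu\le C^{\B*}$. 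Combining the two cases proves $C(\pil^{U^*})\le C(\pil^{(S^*-\tau\mu)^+})\le C^{\B*}$.

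The routine parts are the limit interchange and the $\xb_0=\mathbf 0$ bookkeeping needed to invoke Lemma~\ref{lemma:Attainment} and Lemma~\ref{lemma:costdominate}. The one point that needs care, and the reason the proof is not a one-liner, is that Lemma~\ref{lemma:costdominate} compares $\L$ with $\B$ only under the \emph{same} PIL level and only for $U\ge0$, so it does not by itself reach the \emph{optimal} back-order cost; the bridge is the identity $C^{\B}(\pil^{S^*-\tau\mu})=C^{\B*}$, which is valid precisely when $S^*\ge\tau\mu$, and this is exactly why the regime $S^*<\tau\mu$ must be argued separately, where the crude newsvendor lower bound $C^{\B*}>p\mu$ suffices. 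I do not expect any genuine obstacle beyond correctly lining up these two ingredients.
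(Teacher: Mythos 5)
Your proposal is correct and follows essentially the same route as the paper: both split on the sign of $S^*-\tau\mu$, invoke Lemma~\ref{lemma:costdominate} (with Lemma~\ref{lemma:Attainment} for well-definedness) to get the per-period cost comparison when $S^*\ge\tau\mu$, and handle $S^*<\tau\mu$ by computing $C(\pil^0)=p\mu$ and bounding $C^{\B*}$ from below by the newsvendor penalty term. The only cosmetic difference is that you spell out the $\limsup$ bookkeeping that the paper leaves implicit.
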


\begin{proof}{Proof.}
Since $\xb_0=\textbf{0}$, we can attain the projected inventory level $U$ in $\L$ in every period. (Regardless of this assumption, the number of periods for which
it is not possible to attain a projected inventory level $U$ in $\L$ is finite almost surely.) Now consider two cases: $S^*\geq \tau\mu$
and $S^*<\tau\mu$.
\begin{enumerate}
    \item Case $S^*\geq\tau\mu$: Without loss of generality let period 0 be the first period in which it is possible to place an order to attain $U=S^*-\tau\mu\geq 0$. By Lemma \ref{lemma:costdominate} it holds that $\E[c_t]\leq\E[c^{\mathcal{B}}_t]=C^{\mathcal{B}*}$ for all $t\geq\tau$ which implies that $C(\pil^{S^*-\tau\mu})\leq C^{\mathcal{B}^*}$. 
    \item Case $S^*<\tau\mu$: Observe first that $C(\pil^0)=p\mu$ because under $U=0$ there is no inventory and all demand is lost. We now have
    $C^{\mathcal{B}*} = C^\mathcal{B}(\pil^{S^*-\tau\mu})
    =p\E[(D[0,\tau]-S^*)^+]+h\E[(S^*-D[0,\tau])^+]\geq
    p\E[D[0,\tau]-S^*]>p\mu=C(\pil^0)$, where the strict inequality holds because $0\leq S^*<\tau\mu$. 
\end{enumerate}
That $C(\pil^{U^*})\leq C(\pil^{(S^*-\tau\mu)^+})$ follows from the definition of $U^*$.\Halmos
\end{proof}

\subsection{Asymptotic optimality as $p\to\infty$}
\label{sec:optimalityLargeP}

To describe penalty cost asymptotics we need the following assumption on the distribution of lead time demand which is identical to
assumption 1 of \cite{Huhetal2009MS} and \cite{Bijvanketal2015}:

\begin{assumption}
\label{assumption:tailnotheavy}
The random variable $D[0,\tau]$ has finite mean and is (i) bounded or
(ii) is unbounded and $\lim_{x\to\infty} \E\big[D[0,\tau]-x \big| D[0,\tau]> x\big]/x=0$.
\end{assumption}

Assumption \ref{assumption:tailnotheavy} is discussed in some detail in Section 3 of \cite{Huhetal2009MS}.
All distributions commonly used to model demand, including Gaussian, gamma, Poisson, negative-binomial, Mixed Erlang,
and Weibull distributions, satisfy this assumption.

\begin{theorem}\label{thm:asymblargeP}
Under assumption 1, the best PIL-policy is asymptotically optimal for the lost sales inventory system as the cost of a lost sale increases:
\[
\lim_{p\to\infty} \frac{C(\pil^{(S^*(p)-\tau\mu)^+})}{C^*}
=\lim_{p\to\infty} \frac{C(\pil^{U^*})}{C^*}=1.
\]
\end{theorem}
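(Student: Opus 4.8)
The plan is to sandwich the two PIL cost-rates between the lost-sales optimum $C^{*}$ and the optimal back-order cost-rate $C^{\mathcal{B}*}$, and then import from \cite{Huhetal2009MS} the fact that $C^{\mathcal{B}*}/C^{*}\to1$ as $p\to\infty$ under Assumption~\ref{assumption:tailnotheavy}. First I would record the elementary chain
\[
C^{*}\;\le\;C(\pil^{U^{*}})\;\le\;C(\pil^{(S^{*}(p)-\tau\mu)^{+}})\;\le\;C^{\mathcal{B}*},
\]
where the first inequality holds because $C^{*}$ is the cost-rate of a policy optimal over \emph{all} admissible policies and PIL policies are a subclass, the second because $U^{*}\in\argmin_{U}C(\pil^{U})$, and the third is exactly Theorem~\ref{thm:LdomB}. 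Since $C^{*}>0$, dividing through gives
\[
1\;\le\;\frac{C(\pil^{U^{*}})}{C^{*}}\;\le\;\frac{C(\pil^{(S^{*}(p)-\tau\mu)^{+}})}{C^{*}}\;\le\;\frac{C^{\mathcal{B}*}}{C^{*}},
\]
so it suffices to show the rightmost ratio tends to $1$.

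To establish $\lim_{p\to\infty}C^{\mathcal{B}*}/C^{*}=1$ I would invoke \cite{Huhetal2009MS}. Under Assumption~\ref{assumption:tailnotheavy} --- which is their assumption~1 and which forces the expected overshoot $\E[D[0,\tau]-x\mid D[0,\tau]>x]=o(x)$, hence $C^{\mathcal{B}*}=h\E[(S^{*}-D[0,\tau])^{+}]+p\E[(D[0,\tau]-S^{*})^{+}]\sim hS^{*}(p)$ --- their analysis yields a matching lower bound $C^{*}\ge(1-o(1))C^{\mathcal{B}*}$; together with $C^{*}\le C^{\mathcal{B}*}$ (recovered here as a corollary of Theorem~\ref{thm:LdomB}, originally \cite{janakiramanetal2007}) this gives $C^{\mathcal{B}*}/C^{*}\to1$. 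Substituting into the displayed inequalities and letting $p\to\infty$ squeezes both $C(\pil^{U^{*}})/C^{*}$ and $C(\pil^{(S^{*}(p)-\tau\mu)^{+}})/C^{*}$ to $1$, which is the claim.

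The main obstacle is essentially bookkeeping rather than new mathematics: all the genuinely new work lies in Theorem~\ref{thm:LdomB} and the association-based comparison of Section~\ref{sec:Comparison}, which produce a \emph{single, explicit} lost-sales policy beating $C^{\mathcal{B}*}$. What remains is to confirm that our Assumption~\ref{assumption:tailnotheavy} matches precisely the hypothesis under which \cite{Huhetal2009MS} obtain $C^{\mathcal{B}*}/C^{*}\to1$ (so that no strengthening of the hypothesis is smuggled in), and to phrase the squeeze cleanly; no new estimates on the PIL dynamics are needed.
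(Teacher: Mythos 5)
Your proof is correct and follows essentially the same route as the paper: sandwich $C(\pil^{U^*})$ and $C(\pil^{(S^*(p)-\tau\mu)^+})$ between $C^*$ and $C^{\mathcal{B}*}$ via Theorem~\ref{thm:LdomB}, then invoke Theorem~3 of \cite{Huhetal2009MS} for $\lim_{p\to\infty}C^{\mathcal{B}*}/C^*=1$ under Assumption~\ref{assumption:tailnotheavy}. The paper states this more tersely but the argument is identical.
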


\begin{proof}{Proof.}
By Theorem 3 of \cite{Huhetal2009MS} we have 
$\lim_{p\to\infty} C^{\B*}/C^{*}=1$. Combining this with Theorem \ref{thm:LdomB} yields the result. \Halmos
\end{proof}
Recall that by Lemma~\ref{lem:pilisimproving}, a 1-step policy improvement of a constant order policy yields a PIL policy. The former class of policies is not asymptotically optimal for $p\rightarrow \infty$ and tends to perform poorly in this regime, while Theorem~\ref{thm:asymblargeP} demonstrates that the latter class of policies are asymptotically optimal as $p\rightarrow \infty$. Thus policy improvement qualitatively alters asymptotic performance.

\section{Computational aspects}
\label{sec:CompAspects}

In this section we discuss the computation of the projected inventory level $\E[J_{t+\tau-1}|\xb_t]$, the optimization of the projected inventory level $U$, and efficient simulation estimators.  

\subsection{Inventory Projection}\label{sec:InventoryProjection}

Implementation of the PIL policy requires that the projected inventory level, $\E[J_{t+\tau-1}|\xb_t]$, is computed every period $t$. This can be done relatively straightforwardly when demand has a discrete distribution by using the recursive expressions in \eqref{eq:dynamics1} through \eqref{eq:dynamics3}. In this sub-section, we show how $\E[J_{t+\tau-1}|\xb_t]$ can be computed similarly straightforwardly when demand has a Mixed Erlang (ME) distribution. The class of ME distributions is a powerful modeling tool because it can approximate any non-negative distribution arbitrarily closely (cf. Theorem 5.5.1 of \cite{tijms2003}), can be fitted easily on moments, and has many computational advantages in multi-echelon inventory theory \citep{vanhoutum2006}. We next define ME distributions and discuss how to project inventory levels when demand has a ME distribution.

Let $\{E_{i,t}\}_{i=1}^\infty$ be an i.i.d. sequence of exponential random variables with mean $1/\lambda$ for each $t\in\N_0$. Furthermore let $\{K_t\}_{t=0}^\infty$ be a sequence of i.i.d. random variable on the non-negative integers with probability mass function $\P(K_t=k)=\theta_k$, $k\in\N_0$. Demand has an ME distribution when $D_t=\sum_{i=1}^{K_t} E_{i,t}$. A common parameterization of mixed Erlang distributions is based on two moment fitting \citep[e.g.][]{tijms2003,vanhoutum2006}. For convenience we provide this fitting procedure in Section \ref{sec:twomomentfit}.

In order to intuitively explain the efficient inventory projection method that follows, we imbue ME distributed demand with the following interpretation: Each period $K_t$ \emph{customers} arrive, each demanding an exponentially distributed amount of stock. 
Recall that $\E[J_{t+\tau-1}|\xb_t]=I_t+q[t+1,t+\tau-1]-\tau\mu +\E\left[L[t,t+\tau-1] \mid \xb_t\right]$ so that in order to project the inventory level, it suffices to evaluate $\E\left[L[t,t+\tau-1] \mid \xb_t\right]$. The main idea now is to count inventory and lost sales in terms of the number of customers (each with one exponential phase of demand) that can be satisfied. 
Let $\tilde{I}_t$ denote the number of customers whose demand can be met fully with the inventory available at the beginning of period $t$. Then conditional on $\xb_t=(I_t,q_{t+1},q_{t+2},\ldots,q_{t+\tau-1})$, $\tilde{I}_t$ is Poisson distributed with mean $\lambda I_t$. Since $K_t$ customers will arrive in period $t$, there will be $\tilde{L}_t := (K_t-\tilde{I}_t)^+$ customers whose demand cannot be filled completely; note that for one of those customers the demand can be filled partially. Next, we crucially observe that for the customer whose demand is met partially, the amount of demand (in original units) that remains unfulfilled has an exponential distribution with mean $1/\lambda$ due to the lack of memory of the exponential distribution. Thus $\E[L_t \mid \xb_t] = \lambda^{-1} \E[\tilde{L}_t \mid \xb_t]$. Similarly, there is inventory to satisfy the demand of another $\tilde{J}_t:=(\tilde{I}_t-K_t)^+$ customers at the end of period $t$. Let $Q_{t+1}$ have a Poisson distribution with mean $\lambda q_{t+1}$. Then in period $t+1$ there is inventory to satisfy the demand of $\tilde{I}_{t+1}=\tilde{J}_t+Q_{t+1}$ customers. This reasoning can be continued to obtain the following dynamics:
\begin{align}
    \tilde{J}_t &= (\tilde{I}_t-K_t)^+,\\
    \tilde{L}_t &= (K_t - \tilde{I}_t)^+,\\
    \tilde{I}_{t+1}&=\tilde{J}_{t}+Q_{t+1},
\end{align}
with the initial conditions that $\tilde{I}_t$ has a Poisson distribution with mean $\lambda I_t$ and $Q_t$ has a Poisson distribution with mean $\lambda q_t$, where $1/\lambda$ is the mean of the exponential distributions associated with the ME demand. Note that since $\tilde{I}_t$, $\tilde{J}_t$ and $\tilde{L}_t$ all denote a number of customers (whose demand can be met in full or is partially lost) they are distributed on the non-negative integers. Thus the distributions of $\tilde{I}_{t+j}|\xb_t$, $\tilde{J}_t+j|\xb_t$ and $\tilde{L}_{t+j}|\xb_t$ can be computed recursively for $j\in\{0,\ldots,\tau-1\}$; see Appendix \ref{sec:recursions}. 
This recursion will generally require that the distributions of $\tilde{I}_t$, $\tilde{J}_t$, and $Q_t$ are truncated at a sufficiently high level. $K_t$ will usually have a finite support $k_{\max} = \sup\{k\in\N\mid \theta_k>0\}$. In particular, under two moment fits, $K_t$ will have a two-point distribution, see Section \ref{sec:twomomentfit}. Whenever $k_{\max}<\infty$, $\tilde{L}_{t+j}$ will be supported up to $(j+1)k_{\max}$. From this it follows that the distribution of $\tilde{L}_{t+j}$ can be determined when the distributions of $\tilde{I}_t$ and $\tilde{J}_t$ are known up till $(\tau+1) k_{\max}$. Thus, the distributions of $\tilde{I}_t$ and $\tilde{J}_t$ can be truncated at $(\tau+1) k_{\max}$ without affecting the distribution of $\tilde{L}_{t+j}$. This is convenient for computational purposes as there is no need to truncate any infinite summations. Thus we can compute the projected inventory level as
\begin{equation}
\label{eq:projectionefficient}
\E[J_{t+\tau-1} \mid \xb_t]=I_t+q[t+1,t+\tau-1]-\tau\mu + \frac{1}{\lambda}\E\left[\tilde{L}[t,t+\tau-1] \mid \xb_t\right],
\end{equation}
when demand has a ME distribution. The theoretical running time complexity of this procedure is polynomial in $k_{\max}$ and $\tau$. In practice, the procedure is efficient (see Section \ref{sec:LargeTestBed}).

\subsection{Optimization of the projected inventory level policy}
\label{sec:search}
We next discuss the optimization of $U$, i.e., we discuss and study the problem:
\[\min_{U\ge0} C(\pil^U) \]
The expected cost incurred in period $t$ can be written as $c_t =h (I_t-D_t)^+ + p (D_t-I_t)^+= h (I_t- D_t) + (h+p)(D_t-I_t)^+$. Since $\E [I_t]=U$ by Lemma~\ref{lemma:Attainment}, 
we obtain
\begin{equation}\label{eq:costfunctionPIL}
C(\pil^U) = hU-h\E[D]+(h+p)\limsup_{T\rightarrow\infty}  \E\left[\frac{1}{T-\tau+1}L[\tau,T](\pil^U) \right]
\end{equation}
The following theorem wil facilitate the optimization problem. Its proof is in Appendix~\ref{sec:coststructure}.
\begin{lemma}
\label{thm:coststructure}
For any given $\xb_0$ and demand sequence $D_0,\ldots, D_{t-1},D_{t},\ldots,D_{t+\tau}$:
\begin{enumerate}
    \item The cumulative inventory ordered until period $t+\tau$ (i.e. $q[1,t+\tau](\pil^U) | D_0,\ldots,D_{t-1}$) is non-decreasing and concave in $U$.
    \item The cumulative lost demand until period $t+\tau$ (i.e. $L[1,t+\tau](\pil^U) | D_0,\ldots,D_{t+\tau}$) is non-increasing and convex in $U$.
\end{enumerate}
\end{lemma}
The next theorem follows immediately from Lemma \ref{thm:coststructure} and is facilitates optimization of the PIL-level. 
\begin{theorem}
    \label{thm:cpilconvex}
    $C(\pil^U)$ is convex in $U$.
\end{theorem}
\begin{proof}{Proof.}
$\limsup_{T\rightarrow\infty}  \E\left[\frac{1}{T-\tau+1}L[\tau,T](\pil^U) \right]$ is non-increasing and convex in $U$ by Lemma \ref{thm:coststructure}. Combining with \eqref{eq:costfunctionPIL} yields the result. \Halmos
\end{proof}

\subsection{Low variance simulation estimator}
\label{sec:estimator}

The performance of the PIL policy can only be evaluated through simulation. The obvious simulation estimate of $C(\pil^U)$ (or the cost-rate of any other policy) is $\frac{1}{t_2-t_1} \sum_{t=t_1+1}^{t_2} (p L_t + h J_t)$,
where $t_2-t_1$ is a simulation run-length sufficiently long to obtain an accurate estimate and $t_1$ is the warm-up period. 
For the  projected inventory level policy we can exploit results from Sections \ref{sec:search} and \ref{sec:InventoryProjection} to determine an unbiased estimator with much lower variance than this obvious choice. This variance reduction allows us to obtain accurate simulation estimates with much smaller simulation run-times. First we observe from equation \eqref{eq:costfunctionPIL} that another unbiased estimator of $C(\pil^U)$ is given by $\frac{1}{t_2-t_1} \sum_{t=t_1+1}^{t_2} ((p+h) L_t + h(U-\mu))$. Next we observe that in this estimator, $L_t$, can be replaced by $\E[L_{t+\tau-1}\mid \xb_t ]$. This last expectation has a much smaller variance than $L_t$ and its value is computed in each period already as part of projecting the inventory level; see \eqref{eq:projectionefficient}. Thus an unbiased estimator for $C(\pil^U)$ is given by
\begin{equation}
\widehat{C(\pil^U)} = \frac{1}{t_2-t_1} \sum_{t=t_1+1}^{t_2} ((p+h) \E[L_{t+\tau-1}\mid \xb_t ] + h(U-\mu)).
\end{equation}
In numerical computations, we find that the required simulation run-length to obtain a given precision is around two orders of magnitude smaller than the required run-length when using the obvious estimator $\frac{1}{t_2-t_1} \sum_{t=t_1+1}^{t_2} (p L_t + h J_t)$. 

\section{Numerical Results}
\label{sec:NumericalResults}

The PIL policy is asymptotically optimal for large $\tau$ if $D$ has an exponential distribution. In Section \ref{sec:longtaunumerics}, we provide numerical evidence that the PIL also outperforms the COP (and base-stock policy) for non-exponential distributions even for long lead times. Then in Section \ref{sec:zipkintestbed}, we benchmark
the performance of the PIL policy against other policies including
the optimal policy for the standard test-bed of \cite{Zipkin2008numeric}.
Finally in Section \ref{sec:LargeTestBed}, we provide numerical benchmarks for a test-bed of instances of the size one is likely to encounter in practice. All performance evaluations are done with simulation implemented in C. When multiple policies are compared, we use common random numbers. After a warm-up simulation run, run-length and batch-sizes in the simulation are set such that the half-width of a 95\%-confidence interval is less than 1\% of the point estimate, and there is no statistically significant dependence between consecutive batches. This section also provides results on the computational feasibility of computing inventory projections. 

\subsection{Long lead times}
\label{sec:longtaunumerics}

We created a test-bed in which demand has a Mixed Erlang distribution with mean 100 and the holding cost is fixed at $h=1$. We then varied the coefficient of variation of the one period demand ($\sqrt{\Var[D]/(\E[D])^2}$) to be either $\frac{1}{2}$ or $\frac{3}{2}$ using the two moment fitting procedure in Section \ref{sec:twomomentfit}. We also varied the lead-time to be anything between 1 and 20, $\tau\in\{1,\ldots,20\}$ and the penalty cost in the set $p\in\{4,9,19\}$. This makes for a total of $2\cdot3\cdot20=120$ instances. For each of these instances we optimized the constant order policy, the base-stock policy and the PIL policy. The results are shown in Figure \ref{fig:tauplot}. 

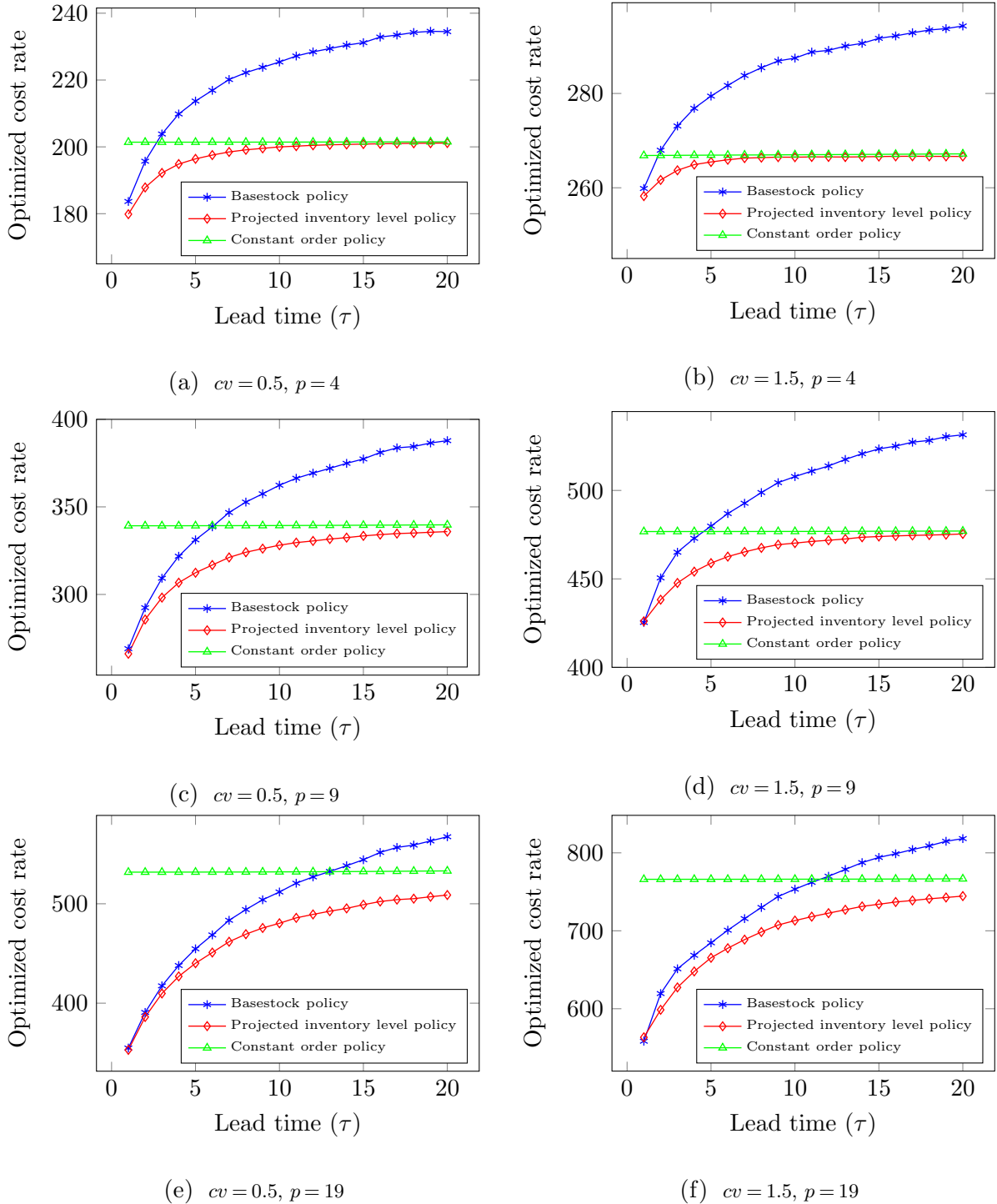
\begin{figure}[h!]
\begin{center}

\begin{subfigure}{0.49\textwidth}
\begin{tikzpicture}
\begin{axis}[xlabel={Lead time ($\tau$)}, ylabel={Optimized cost rate}, width=0.95\textwidth, height=0.7\textwidth, 
legend pos=south east,
legend cell align=left, legend style={font=\tiny}, ymin=165]
\addplot[mark=asterisk, color=blue, line width=.5pt,mark size=2pt] table[x=tau,y=CBS, col sep=comma]{tauplot20220903_MEcvhalf_penalty4.csv};
\addlegendentry{Base-stock policy}
\addplot[mark=diamond, color=red, line width=.5pt,mark size=2pt] table[x=tau,y=CPIL, col sep=comma]{tauplot20220903_MEcvhalf_penalty4.csv};
\addlegendentry{Projected inventory level policy}
\addplot[mark=triangle, color=green, line width=.5pt,mark size=2pt] table[x=tau,y=COP, col sep=comma]{tauplot20220903_MEcvhalf_penalty4.csv};
\addlegendentry{Constant order policy}
legend pos=south east
\end{axis}
\end{tikzpicture}
\caption{\footnotesize $cv=0.5$, $p=4$}
\end{subfigure}
\begin{subfigure}{0.49\textwidth}
\begin{tikzpicture}
\begin{axis}[xlabel={Lead time ($\tau$)}, ylabel={Optimized cost rate}, width=0.95\textwidth, height=0.7\textwidth, 
legend pos=south east,
legend cell align=left, legend style={font=\tiny}, ymin=245]
\addplot[mark=asterisk, color=blue, line width=.5pt,mark size=2pt] table[x=tau,y=CBS, col sep=comma]{tauplot20220903_MEcvonehalf_penalty4.csv};
\addlegendentry{Base-stock policy}
\addplot[mark=diamond, color=red, line width=.5pt,mark size=2pt] table[x=tau,y=CPIL, col sep=comma]{tauplot20220903_MEcvonehalf_penalty4.csv};
\addlegendentry{Projected inventory level policy}
\addplot[mark=triangle, color=green, line width=.5pt,mark size=2pt] table[x=tau,y=COP, col sep=comma]{tauplot20220903_MEcvonehalf_penalty4.csv};
\addlegendentry{Constant order policy}
legend pos=south east
\end{axis}
\end{tikzpicture}
\caption{\footnotesize $cv=1.5$, $p=4$}
\end{subfigure}

\begin{subfigure}{0.49\textwidth}
\begin{tikzpicture}
\begin{axis}[xlabel={Lead time ($\tau$)}, ylabel={Optimized cost rate}, width=0.95\textwidth, height=0.7\textwidth, 
legend pos=south east,
legend cell align=left, legend style={font=\tiny}]
\addplot[mark=asterisk, color=blue, line width=.5pt,mark size=2pt] table[x=tau,y=CBS, col sep=comma]{tauplot20220903_MEcvhalf_penalty9.csv};
\addlegendentry{Base-stock policy}
\addplot[mark=diamond, color=red, line width=.5pt,mark size=2pt] table[x=tau,y=CPIL, col sep=comma]{tauplot20220903_MEcvhalf_penalty9.csv};
\addlegendentry{Projected inventory level policy}
\addplot[mark=triangle, color=green, line width=.5pt,mark size=2pt] table[x=tau,y=COP, col sep=comma]{tauplot20220903_MEcvhalf_penalty9.csv};
\addlegendentry{Constant order policy}
legend pos=south east
\end{axis}
\end{tikzpicture}
\caption{\footnotesize $cv=0.5$, $p=9$}
\end{subfigure}
\begin{subfigure}{0.49\textwidth}
\begin{tikzpicture}
\begin{axis}[xlabel={Lead time ($\tau$)}, ylabel={Optimized cost rate}, width=0.95\textwidth, height=0.7\textwidth, 
legend pos=south east,
legend cell align=left, legend style={font=\tiny}, ymin=400]
\addplot[mark=asterisk, color=blue, line width=.5pt,mark size=2pt] table[x=tau,y=CBS, col sep=comma]{tauplot20220907_MEcvonehalf_penalty9.csv};
\addlegendentry{Base-stock policy}
\addplot[mark=diamond, color=red, line width=.5pt,mark size=2pt] table[x=tau,y=CPIL, col sep=comma]{tauplot20220907_MEcvonehalf_penalty9.csv};
\addlegendentry{Projected inventory level policy}
\addplot[mark=triangle, color=green, line width=.5pt,mark size=2pt] table[x=tau,y=COP, col sep=comma]{tauplot20220907_MEcvonehalf_penalty9.csv};
\addlegendentry{Constant order policy}
legend pos=south east
\end{axis}
\end{tikzpicture}
\caption{\footnotesize $cv=1.5$, $p=9$}
\end{subfigure}

\begin{subfigure}{0.49\textwidth}
\begin{tikzpicture}
\begin{axis}[xlabel={Lead time ($\tau$)}, ylabel={Optimized cost rate}, width=0.95\textwidth, height=0.7\textwidth, 
legend pos=south east,
legend cell align=left, legend style={font=\tiny}]
\addplot[mark=asterisk, color=blue, line width=.5pt,mark size=2pt] table[x=tau,y=CBS, col sep=comma]{tauplot20220903_MEcvhalf_penalty19.csv};
\addlegendentry{Base-stock policy}
\addplot[mark=diamond, color=red, line width=.5pt,mark size=2pt] table[x=tau,y=CPIL, col sep=comma]{tauplot20220903_MEcvhalf_penalty19.csv};
\addlegendentry{Projected inventory level policy}
\addplot[mark=triangle, color=green, line width=.5pt,mark size=2pt] table[x=tau,y=COP, col sep=comma]{tauplot20220903_MEcvhalf_penalty19.csv};
\addlegendentry{Constant order policy}
legend pos=south east
\end{axis}
\end{tikzpicture}
\caption{\footnotesize $cv=0.5$, $p=19$}
\end{subfigure}
\begin{subfigure}{0.49\textwidth}
\begin{tikzpicture}
\begin{axis}[xlabel={Lead time ($\tau$)}, ylabel={Optimized cost rate}, width=0.95\textwidth, height=0.7\textwidth, 
legend pos=south east,
legend cell align=left, legend style={font=\tiny}, ymin=520]
\addplot[mark=asterisk, color=blue, line width=.5pt,mark size=2pt] table[x=tau,y=CBS, col sep=comma]{tauplot20220907_MEcvonehalf_penalty19.csv};
\addlegendentry{Base-stock policy}
\addplot[mark=diamond, color=red, line width=.5pt,mark size=2pt] table[x=tau,y=CPIL, col sep=comma]{tauplot20220907_MEcvonehalf_penalty19.csv};
\addlegendentry{Projected inventory level policy}
\addplot[mark=triangle, color=green, line width=.5pt,mark size=2pt] table[x=tau,y=COP, col sep=comma]{tauplot20220907_MEcvonehalf_penalty19.csv};
\addlegendentry{Constant order policy}
legend pos=south east
\end{axis}
\end{tikzpicture}
\caption{\footnotesize $cv=1.5$, $p=19$}
\end{subfigure}

\end{center}
\caption{Optimized Cost rate as a function of lead time for different policies for Mixed Erlang demand with mean 100 and coefficient of variation ($cv$) of 0.5 (sub-figures a, c, and e) and 1.5 (sub-figures b, d, and f) for penalty costs of 4, 9 and 19 respectively and holding cost rate 1.}
\label{fig:tauplot}
\end{figure}

The PIL policy has superior performance relative to the COP and base-stock policy in all cases. Figure \ref{fig:tauplot} also suggests that the PIL policy is asymptotically optimal as $\tau\to\infty$ for demand distributions other than the exponential distribution.

\subsection{Standard test-bed}
\label{sec:zipkintestbed}

\cite{Zipkin2008numeric} provides a test-bed to compare the performance of notable policies for the canonical lost sales inventory model. This test-bed has relatively small instances as the performance of all policies including the optimal policy are evaluated numerically. 
This test-bed has two demand distributions, Poisson and geometric, both with mean 5. 
The holding cost is fixed at $h=1$ and the other parameters are varied as a full factorial: $\tau\in\{1,2,3,4\}$, $p\in\{4,9,19,39\}$ leading to a total of 32 instances. 
\cite{Zipkin2008numeric} report the performance of notable policies advocated in literature (e.g. \cite{Morton1971,Levietal2008MOOR,HuhJankiraman2011}).
Here we report on the base-stock policy, constant order policy, myopic policy, capped base-stock policy, and the PIL policy. The myopic policy is the best performing policy in Zipkin's test-bed that has intuitive appeal. The myopic policy places an order in period $t$ to minimize the projected cost in period $t+\tau$ given the current state. The myopic policy is defined formally as:
\[
\myo_t(\xb):=\argmin_{q\geq0} \E[pL_{\tau}+hJ_{\tau} \mid \xb_0=\xb].
\]
The capped base-stock policy is formally defined as
\[
\cbs^{S,r}_t(\xb):=\min\{\base^S_t(\xb),\cop^r_t(\xb)\}.
\]
The capped-base-stock policy is also asymptotically optimal both as $p\to\infty$ and as $\tau\to\infty$, as is intuitively clear as the parameters of this policy can be set to mimic either a base-stock policy (by setting the cap $r$ arbitrarily high) or as a constant order policy (by setting the base-stock level $S$ arbitrarily high). 
Table \ref{tab:zipkin} reports the performance of these policies.
The performance of the best capped base-stock policies reported in Table \ref{tab:zipkin} are taken from \cite{xin2019}. He found the optimized parameters by using Matlab's solver ``fmincon''. We also replicated these results but found that whether we found the same policy performance depends on the initial solution provided to the solver. This indicates that the non-convexity of $C(\cbs^{S,r})$ in $S$ and $r$ can be a challenge.

The performance of the PIL policy is closest to optimal with an average optimality gap of 0.6\% whereas the base-stock policy has an average optimality gap of 3.5\%, the myopic policy of 2.8\% and the capped base-stock policy of 0.7\%.
The average performance of the best constant order policy is quite poor with an average optimality gap of 47.4\%. It appears that the PIL policy has attractive asymptotic properties as well as superior empirical performance compared to state of the art heuristics.

\begin{table}[h!]
  \centering
  \scriptsize
  \caption{Comparison of policies on Zipkin's test-bed}
\begin{tabular}{|c|l|cccc|cccc|}
\cline{3-10}\multicolumn{1}{r}{} &       & \multicolumn{4}{c|}{Poisson demand} & \multicolumn{4}{c|}{Geometric demand} \bigstrut\\
\cline{3-10}\multicolumn{1}{r}{} &       & \multicolumn{4}{c|}{Lead-time $\tau$} & \multicolumn{4}{c|}{Lead-time $\tau$} \bigstrut\\
\hline
\multicolumn{1}{|l|}{Penalty per lost sale} & Policy & 1     & 2     & 3     & 4     & 1     & 2     & 3     & 4 \bigstrut\\
\hline
\multirow{6}[2]{*}{$p=4$} & Optimal & \multicolumn{1}{r}{4.04} & \multicolumn{1}{r}{4.40} & \multicolumn{1}{r}{4.60} & \multicolumn{1}{r|}{4.73} & \multicolumn{1}{r}{9.82} & \multicolumn{1}{r}{10.24} & \multicolumn{1}{r}{10.47} & \multicolumn{1}{r|}{10.61} \bigstrut\\
      & PIL   & \multicolumn{1}{r}{4.04} & \multicolumn{1}{r}{4.40} & \multicolumn{1}{r}{4.62} & \multicolumn{1}{r|}{4.74} & \multicolumn{1}{r}{9.84} & \multicolumn{1}{r}{10.28} & \multicolumn{1}{r}{10.51} & \multicolumn{1}{r|}{10.64} \\
      & Myopic & \multicolumn{1}{r}{4.11} & \multicolumn{1}{r}{4.56} & \multicolumn{1}{r}{4.84} & \multicolumn{1}{r|}{5.06} & \multicolumn{1}{r}{9.95} & \multicolumn{1}{r}{10.57} & \multicolumn{1}{r}{10.99} & \multicolumn{1}{r|}{11.31} \\
      & Base-stock & \multicolumn{1}{r}{4.16} & \multicolumn{1}{r}{4.64} & \multicolumn{1}{r}{4.98} & \multicolumn{1}{r|}{5.20} & \multicolumn{1}{r}{10.04} & \multicolumn{1}{r}{10.70} & \multicolumn{1}{r}{11.13} & \multicolumn{1}{r|}{11.44} \\
      & Capped base-stock & \multicolumn{1}{r}{4.06} & \multicolumn{1}{r}{4.41} & \multicolumn{1}{r}{4.63} & \multicolumn{1}{r|}{4.80} & \multicolumn{1}{r}{9.87} & \multicolumn{1}{r}{10.32} & \multicolumn{1}{r}{10.51} & \multicolumn{1}{r|}{10.70} \\
      & COP   & \multicolumn{4}{c|}{5.27}     & \multicolumn{4}{c|}{11.00} \bigstrut\\
\hline
\multirow{6}[2]{*}{$p=9$} & Optimal & \multicolumn{1}{r}{5.44} & \multicolumn{1}{r}{6.09} & \multicolumn{1}{r}{6.53} & \multicolumn{1}{r|}{6.84} & \multicolumn{1}{r}{14.51} & \multicolumn{1}{r}{15.50} & \multicolumn{1}{r}{16.14} & \multicolumn{1}{r|}{16.58} \bigstrut\\
      & PIL   & \multicolumn{1}{r}{5.45} & \multicolumn{1}{r}{6.12} & \multicolumn{1}{r}{6.58} & \multicolumn{1}{r|}{6.90} & \multicolumn{1}{r}{14.55} & \multicolumn{1}{r}{15.60} & \multicolumn{1}{r}{16.27} & \multicolumn{1}{r|}{16.73} \\
      & Myopic & \multicolumn{1}{r}{5.45} & \multicolumn{1}{r}{6.22} & \multicolumn{1}{r}{6.80} & \multicolumn{1}{r|}{7.20} & \multicolumn{1}{r}{14.64} & \multicolumn{1}{r}{15.93} & \multicolumn{1}{r}{16.86} & \multicolumn{1}{r|}{17.61} \\
      & Base-stock & \multicolumn{1}{r}{5.55} & \multicolumn{1}{r}{6.32} & \multicolumn{1}{r}{6.86} & \multicolumn{1}{r|}{7.27} & \multicolumn{1}{r}{14.73} & \multicolumn{1}{r}{15.99} & \multicolumn{1}{r}{16.87} & \multicolumn{1}{r|}{17.54} \\
      & Capped base-stock & \multicolumn{1}{r}{5.48} & \multicolumn{1}{r}{6.12} & \multicolumn{1}{r}{6.62} & \multicolumn{1}{r|}{6.91} & \multicolumn{1}{r}{14.58} & \multicolumn{1}{r}{15.63} & \multicolumn{1}{r}{16.27} & \multicolumn{1}{r|}{16.73} \\
      & COP   & \multicolumn{4}{c|}{10.27}    & \multicolumn{4}{c|}{18.19} \bigstrut\\
\hline
\multirow{6}[2]{*}{$p=19$} & Optimal & \multicolumn{1}{r}{6.68} & \multicolumn{1}{r}{7.66} & \multicolumn{1}{r}{8.36} & \multicolumn{1}{r|}{8.89} & \multicolumn{1}{r}{19.22} & \multicolumn{1}{r}{20.89} & \multicolumn{1}{r}{22.06} & \multicolumn{1}{r|}{22.95} \bigstrut\\
      & PIL   & \multicolumn{1}{r}{6.68} & \multicolumn{1}{r}{7.68} & \multicolumn{1}{r}{8.42} & \multicolumn{1}{r|}{8.95} & \multicolumn{1}{r}{19.28} & \multicolumn{1}{r}{21.03} & \multicolumn{1}{r}{22.73} & \multicolumn{1}{r|}{23.85} \\
      & Myopic & \multicolumn{1}{r}{6.69} & \multicolumn{1}{r}{7.77} & \multicolumn{1}{r}{8.56} & \multicolumn{1}{r|}{9.18} & \multicolumn{1}{r}{19.37} & \multicolumn{1}{r}{21.30} & \multicolumn{1}{r}{22.79} & \multicolumn{1}{r|}{24.02} \\
      & Base-stock & \multicolumn{1}{r}{6.73} & \multicolumn{1}{r}{7.84} & \multicolumn{1}{r}{8.60} & \multicolumn{1}{r|}{9.23} & \multicolumn{1}{r}{19.40} & \multicolumn{1}{r}{21.31} & \multicolumn{1}{r}{22.73} & \multicolumn{1}{r|}{23.85} \\
      & Capped base-stock & \multicolumn{1}{r}{6.69} & \multicolumn{1}{r}{7.72} & \multicolumn{1}{r}{8.40} & \multicolumn{1}{r|}{8.95} & \multicolumn{1}{r}{19.32} & \multicolumn{1}{r}{21.06} & \multicolumn{1}{r}{22.27} & \multicolumn{1}{r|}{23.28} \\
      & COP   & \multicolumn{4}{c|}{15.78}    & \multicolumn{4}{c|}{28.60} \bigstrut\\
\hline
\multirow{6}[2]{*}{$p=39$} & Optimal & \multicolumn{1}{r}{7.84} & \multicolumn{1}{r}{9.11} & \multicolumn{1}{r}{10.04} & \multicolumn{1}{r|}{10.79} & \multicolumn{1}{r}{23.87} & \multicolumn{1}{r}{26.21} & \multicolumn{1}{r}{27.96} & \multicolumn{1}{r|}{29.36} \bigstrut\\
      & PIL   & \multicolumn{1}{r}{7.84} & \multicolumn{1}{r}{9.12} & \multicolumn{1}{r}{10.09} & \multicolumn{1}{r|}{10.91} & \multicolumn{1}{r}{23.94} & \multicolumn{1}{r}{26.37} & \multicolumn{1}{r}{28.18} & \multicolumn{1}{r|}{29.72} \\
      & Myopic & \multicolumn{1}{r}{7.88} & \multicolumn{1}{r}{9.16} & \multicolumn{1}{r}{10.17} & \multicolumn{1}{r|}{11.04} & \multicolumn{1}{r}{23.97} & \multicolumn{1}{r}{26.55} & \multicolumn{1}{r}{28.61} & \multicolumn{1}{r|}{30.31} \\
      & Base-stock & \multicolumn{1}{r}{7.86} & \multicolumn{1}{r}{9.19} & \multicolumn{1}{r}{10.22} & \multicolumn{1}{r|}{11.06} & \multicolumn{1}{r}{24.00} & \multicolumn{1}{r}{26.55} & \multicolumn{1}{r}{28.51} & \multicolumn{1}{r|}{30.12} \\
      & Capped base-stock & \multicolumn{1}{r}{7.84} & \multicolumn{1}{r}{9.14} & \multicolumn{1}{r}{10.08} & \multicolumn{1}{r|}{10.88} & \multicolumn{1}{r}{24.00} & \multicolumn{1}{r}{26.30} & \multicolumn{1}{r}{28.28} & \multicolumn{1}{r|}{29.76} \\
      & COP   & \multicolumn{4}{c|}{18.21}    & \multicolumn{4}{c|}{36.73} \bigstrut\\
\hline
\end{tabular}%

  \label{tab:zipkin}%
\end{table}%

\subsection{Large instance test-bed}
\label{sec:LargeTestBed}

We created a large test-bed of instances for which the optimal 
policy cannot be tractably computed. However, we believe these 
instances give a fair representation of instances that one may 
encounter in practice. In all these instances, demand has a 
Mixed Erlang distribution with $\E[D]=100$ and 
$h=1$. We varied the lead-time $\tau\in\{1,2,3,4,5,6\}$, 
the penalty cost parameter $p\in\{1,4,9,19,49,99\}$, 
and the coefficient of variation of the one period demand $\sqrt{\Var[D]/(\E[D])^2}\in\{0.15,0.25,0.5,1,1.5,2.0\}$ 
for a total of 216 instances. (We use the two moment fitting procedure in Section~\ref{sec:twomomentfit} to fit a ME distribution.)
We use the Matlab solver ``fminbnd'' to optimize the single parameter of the base-stock, and COP for each of these instances. Matlab's multi-dimensional solver ``fmincon'' is used to optimize the parameters of the capped base-stock and the PIL policy with the following parameters that deviate from default: \texttt{FiniteDifferenceStepSize}=$10^{-2}$, \texttt{OptimalityTolerance}=$10^{-3}$, and \texttt{StepTolerance}=$10^{-8}$. These changes to default settings have been made for the benefit of optimizing the capped base-stock policy as the estimator $\widehat{C(\cbs^{S,r})}=\frac{1}{t_2-t_1} \sum_{t=t_1+1}^{t_2} (p L_t + h J_t)$ is not smooth in $S$ and $r$ along a given sample path for the capped base-stock policy. 
Note that ``fmincon" is the solver also used by \cite{xin2019} to optimize the parameters of the capped base-stock policy and represents the state of the art. $C(\cbs^{S,r})$ is not convex in $S$ and $r$, and so there is no guarantee that the best capped stock policy is found, whereas this is guaranteed for the PIL policy due to Theorem \ref{thm:cpilconvex}.

Both the PIL and the capped base-stock policy ran an initial optimization with a smaller simulation run-time ($100\times$ smaller) to provide a good initial solution before running at high simulation precision.
The PIL policy has the best performance of these four policies. Therefore we report the gap of the COP, base-stock and capped base-stock policies relative to the best PIL policy across this test-bed aggregated by setting; see Table \ref{tab:owntestbed}. 
The performance of the PIL policy is again superior to the base-stock and constant order policy by a considerable margin on average (3.64\% and 38.7\%). 
The gap with the capped base-stock policy exceeds the base-stock policy at 3.65\% on average. As the capped base-stock policy includes the base-stock policy as a special case, this indicates that ``fmincon'' cannot always find the best parameters due to the non-convexity of $C(\cbs^{S,r})$. Running the optimization of the capped base-stock policy with multiple initial solutions improves the performance such that the average gap with the best PIL policy drops from 3.65\% to 0.99\%; details on this can be found in Table 2 of \cite{ArtsVanJaarsveldArxivv2}. This approach more than doubles the computational time of optimizing the parameters of the capped base-stock policy.

The simulation of a given PIL policy requires the evaluation of the projected inventory level, $\E[J_{t+\tau-1}\mid \xb_t]$, for each period. Our implementation in C allowed us to do 18 million projections per minute on average for this test-bed. The computation times to optimize policy parameters for different policies are shown in Table \ref{tab:CompSpeed2}. The average time to optimize a PIL policy is slightly less than the time to optimize the capped base-stock policy at 21.41 seconds and 22.91 seconds respectively. Optimization of the capped base-stock policy requires search in two dimensions and long simulation run-times for accuracy. The PIL policy requires projection, but only search in one dimension and shorter simulation run-times due to the low variance estimator in Section~\ref{sec:estimator}.  Both policies have comparable computational requirements for the given test-bed. We do observe that the computation of optimal PIL-levels scales poorly in the lead-times whereas the computation of parameters for the capped base-stock policy remains unchanged as the lead time grows.

\begin{table}[htbp]
\scriptsize
  \centering
  \caption{Comparison of policy performance on large test-bed}
    \begin{tabular}{cc|rrr|rrr|rrr|}
\cline{3-11}    \multicolumn{1}{r}{} &       & \multicolumn{9}{c|}{Percentage gap with PIL policy} \bigstrut\\
\cline{2-11}          & \multicolumn{1}{|l|}{Policy} & \multicolumn{3}{c|}{base-stock} & \multicolumn{3}{c|}{constant order} & \multicolumn{3}{c|}{capped base-stock} \bigstrut\\
\cline{2-11}          & \multicolumn{1}{|l|}{Gap with PIL} & \multicolumn{1}{l}{min} & \multicolumn{1}{l}{max} & \multicolumn{1}{l|}{avg} & \multicolumn{1}{l}{min} & \multicolumn{1}{l}{max} & \multicolumn{1}{l|}{avg} & \multicolumn{1}{l}{min} & \multicolumn{1}{l}{max} & \multicolumn{1}{l|}{avg} \bigstrut\\
    \hline
    \multicolumn{1}{|c|}{\multirow{6}[2]{*}{CV of demand}} & \multicolumn{1}{c|}{0.4} & 0.08  & 16.49 & 4.77  & 0.10  & 199.53 & 49.20 & 0.10  & 16.49 & 4.79 \bigstrut[t]\\
    \multicolumn{1}{|c|}{} & \multicolumn{1}{c|}{0.6} & 0.01  & 14.25 & 4.42  & 0.09  & 176.10 & 44.00 & 0.03  & 14.25 & 4.43 \\
    \multicolumn{1}{|c|}{} & \multicolumn{1}{c|}{0.8} & -0.09 & 11.94 & 4.05  & 0.02  & 160.39 & 40.12 & -0.09 & 11.95 & 4.05 \\
    \multicolumn{1}{|c|}{} & \multicolumn{1}{c|}{1} & -0.15 & 10.17 & 3.49  & -0.04 & 138.85 & 35.00 & -0.14 & 10.17 & 3.49 \\
    \multicolumn{1}{|c|}{} & \multicolumn{1}{c|}{1.2} & -3.81 & 8.61  & 2.83  & 0.02  & 125.56 & 31.70 & -3.80 & 8.61  & 2.83 \\
    \multicolumn{1}{|c|}{} & \multicolumn{1}{c|}{1.4} & -11.23 & 6.71  & 2.31  & -0.02 & 123.77 & 32.01 & -11.23 & 6.72  & 2.32 \bigstrut[b]\\
    \hline
    \multicolumn{1}{|c|}{\multirow{6}[2]{*}{lead time ($\tau$)}} & \multicolumn{1}{c|}{1} & -11.23 & 5.28  & -0.18 & 0.52  & 199.53 & 57.97 & -11.23 & 5.30  & -0.17 \bigstrut[t]\\
    \multicolumn{1}{|c|}{} & \multicolumn{1}{c|}{2} & 0.10  & 9.10  & 2.55  & 0.17  & 158.61 & 46.90 & 0.15  & 9.12  & 2.55 \\
    \multicolumn{1}{|c|}{} & \multicolumn{1}{c|}{3} & 0.24  & 11.80 & 3.60  & 0.07  & 133.20 & 38.72 & 0.26  & 11.80 & 3.61 \\
    \multicolumn{1}{|c|}{} & \multicolumn{1}{c|}{4} & 0.35  & 13.73 & 4.52  & 0.05  & 116.55 & 33.28 & 0.35  & 13.75 & 4.52 \\
    \multicolumn{1}{|c|}{} & \multicolumn{1}{c|}{5} & 0.42  & 15.23 & 5.33  & -0.02 & 104.84 & 29.20 & 0.42  & 15.23 & 5.34 \\
    \multicolumn{1}{|c|}{} & \multicolumn{1}{c|}{6} & 0.83  & 16.49 & 6.05  & -0.04 & 94.97 & 25.95 & 0.84  & 16.49 & 6.06 \bigstrut[b]\\
    \hline
    \multicolumn{1}{|c|}{\multirow{6}[2]{*}{Penalty cost ($p$)}} & \multicolumn{1}{c|}{1} & 1.69  & 16.49 & 7.83  & -0.04 & 4.10  & 0.55  & 1.69  & 16.49 & 7.84 \bigstrut[t]\\
    \multicolumn{1}{|c|}{} & \multicolumn{1}{c|}{4} & 0.02  & 12.90 & 6.58  & 0.26  & 20.75 & 5.25  & 0.02  & 12.90 & 6.58 \\
    \multicolumn{1}{|c|}{} & \multicolumn{1}{c|}{9} & -8.65 & 8.10  & 3.97  & 4.20  & 42.64 & 14.66 & -8.65 & 8.10  & 3.98 \\
    \multicolumn{1}{|c|}{} & \multicolumn{1}{c|}{19} & -11.23 & 4.72  & 2.02  & 14.63 & 74.15 & 31.79 & -11.23 & 4.72  & 2.03 \\
    \multicolumn{1}{|c|}{} & \multicolumn{1}{c|}{49} & -8.79 & 4.82  & 0.88  & 40.57 & 133.93 & 69.04 & -8.79 & 4.82  & 0.89 \\
    \multicolumn{1}{|c|}{} & \multicolumn{1}{c|}{99} & -7.19 & 4.93  & 0.58  & 71.85 & 199.53 & 110.73 & -7.18 & 4.93  & 0.59 \bigstrut[b]\\
    \hline
    \multicolumn{2}{|c|}{Total} & -11.23 & 16.49 & 3.64  & 0.0   & 199.5 & 38.7  & -11.23 & 16.49 & 3.65 \bigstrut\\
    \hline
    \end{tabular}%
  \label{tab:owntestbed}%
\end{table}%

\begin{table}[htbp]
\scriptsize
  \centering
  \caption{Computation times to optimize policy parameters}
    \begin{tabular}{|cc|rrr|rrr|rrr|rrr|}
\cline{3-14}    \multicolumn{1}{r}{} &       & \multicolumn{12}{c|}{Computation times in seconds} \bigstrut\\
\cline{3-14}    \multicolumn{1}{r}{} &       & \multicolumn{3}{c|}{base-stock} & \multicolumn{3}{c|}{constant order} & \multicolumn{3}{c|}{capped base-stock} & \multicolumn{3}{c|}{PIL} \bigstrut\\
\cline{3-14}    \multicolumn{1}{r}{} &       & \multicolumn{1}{l}{min} & \multicolumn{1}{l}{max} & \multicolumn{1}{l|}{avg} & \multicolumn{1}{l}{min} & \multicolumn{1}{l}{max} & \multicolumn{1}{l|}{avg} & \multicolumn{1}{l}{min} & \multicolumn{1}{l}{max} & \multicolumn{1}{l|}{avg} & \multicolumn{1}{l}{min} & \multicolumn{1}{l}{max} & \multicolumn{1}{l|}{avg} \bigstrut\\
    \hline
    \multicolumn{1}{|c|}{\multirow{6}[2]{*}{CV of demand}} & \multicolumn{1}{r|}{0.4} & 7.49  & 12.60 & 9.98  & 8.05  & 12.20 & 10.37 & 3.10  & 56.33 & 22.35 & 1.43  & 118.15 & 34.36 \bigstrut[t]\\
    \multicolumn{1}{|c|}{} & \multicolumn{1}{r|}{0.6} & 6.77  & 11.76 & 9.03  & 6.67  & 10.73 & 8.93  & 2.88  & 56.42 & 20.49 & 1.10  & 69.10 & 21.86 \\
    \multicolumn{1}{|c|}{} & \multicolumn{1}{r|}{0.8} & 6.33  & 12.34 & 9.44  & 6.94  & 10.90 & 8.85  & 13.37 & 99.20 & 25.89 & 0.56  & 97.88 & 16.08 \\
    \multicolumn{1}{|c|}{} & \multicolumn{1}{r|}{1} & 7.04  & 11.62 & 8.93  & 6.68  & 10.33 & 8.27  & 3.08  & 51.90 & 23.48 & 0.50  & 40.19 & 11.02 \\
    \multicolumn{1}{|c|}{} & \multicolumn{1}{r|}{1.2} & 7.63  & 11.86 & 9.19  & 6.04  & 9.83  & 7.71  & 2.90  & 58.36 & 22.68 & 0.59  & 56.85 & 15.32 \\
    \multicolumn{1}{|c|}{} & \multicolumn{1}{r|}{1.4} & 5.89  & 11.16 & 9.27  & 5.15  & 9.88  & 7.18  & 2.97  & 65.91 & 22.58 & 0.63  & 150.78 & 29.83 \bigstrut[b]\\
    \hline
    \multicolumn{1}{|c|}{\multirow{6}[2]{*}{lead time ($\tau$)}} & \multicolumn{1}{r|}{1} & 5.89  & 10.01 & 8.26  & 5.19  & 12.19 & 8.57  & 2.90  & 39.22 & 20.81 & 0.56  & 10.55 & 2.43 \bigstrut[t]\\
    \multicolumn{1}{|c|}{} & \multicolumn{1}{r|}{2} & 7.69  & 11.04 & 9.18  & 5.18  & 12.15 & 8.59  & 2.90  & 56.42 & 25.49 & 0.50  & 25.08 & 6.39 \\
    \multicolumn{1}{|c|}{} & \multicolumn{1}{r|}{3} & 7.69  & 11.86 & 9.30  & 5.15  & 12.15 & 8.62  & 3.13  & 99.20 & 24.35 & 1.06  & 30.00 & 14.25 \\
    \multicolumn{1}{|c|}{} & \multicolumn{1}{r|}{4} & 7.52  & 11.76 & 9.55  & 5.21  & 12.13 & 8.54  & 2.97  & 65.91 & 23.09 & 1.56  & 74.15 & 22.13 \\
    \multicolumn{1}{|c|}{} & \multicolumn{1}{r|}{5} & 6.82  & 12.03 & 9.64  & 5.16  & 11.19 & 8.42  & 3.08  & 41.43 & 22.27 & 2.42  & 125.91 & 34.79 \\
    \multicolumn{1}{|c|}{} & \multicolumn{1}{r|}{6} & 7.67  & 12.60 & 9.91  & 5.16  & 12.20 & 8.56  & 2.88  & 40.14 & 21.46 & 3.53  & 150.78 & 48.49 \bigstrut[b]\\
    \hline
    \multicolumn{1}{|c|}{\multirow{6}[2]{*}{Penalty cost ($p$)}} & \multicolumn{1}{r|}{1} & 6.33  & 11.18 & 8.86  & 5.15  & 8.16  & 6.65  & 11.69 & 43.13 & 23.75 & 0.50  & 11.59 & 3.23 \bigstrut[t]\\
    \multicolumn{1}{|c|}{} & \multicolumn{1}{r|}{4} & 6.77  & 10.21 & 8.51  & 5.76  & 10.00 & 7.77  & 2.90  & 56.42 & 22.00 & 1.20  & 88.42 & 14.70 \\
    \multicolumn{1}{|c|}{} & \multicolumn{1}{r|}{9} & 5.89  & 9.63  & 8.47  & 6.51  & 11.47 & 8.46  & 2.90  & 58.36 & 25.44 & 0.56  & 101.76 & 22.33 \\
    \multicolumn{1}{|c|}{} & \multicolumn{1}{r|}{19} & 7.36  & 10.80 & 9.27  & 7.29  & 11.43 & 8.84  & 3.09  & 99.20 & 21.18 & 0.75  & 125.91 & 27.35 \\
    \multicolumn{1}{|c|}{} & \multicolumn{1}{r|}{49} & 8.39  & 11.76 & 10.32 & 8.14  & 12.19 & 9.64  & 12.82 & 65.91 & 23.54 & 1.98  & 150.78 & 29.47 \\
    \multicolumn{1}{|c|}{} & \multicolumn{1}{r|}{99} & 6.82  & 12.60 & 10.41 & 5.94  & 12.20 & 9.95  & 2.88  & 45.53 & 21.56 & 0.98  & 118.15 & 31.40 \bigstrut[b]\\
    \hline
    \multicolumn{2}{|c|}{Total} & 5.89  & 12.60 & 9.31  & 5.1   & 12.2  & 8.6   & 2.88  & 99.20 & 22.91 & 0.50  & 150.78 & 21.41 \bigstrut\\
    \hline
    \end{tabular}%
  \label{tab:CompSpeed2}%
\end{table}%

\subsection{Discussion}\label{sec:discussion}
A key finding of Sections~\ref{sec:longtaunumerics}-\ref{sec:LargeTestBed} is that the PIL policy consistently outperforms the COP also for non-exponential demand distributions, while the corresponding rigorous proof of Section~\ref{sec:LongLeadTimeRigorous} crucially assumes exponentially distributed demand. We briefly hypothesize in an attempt to understand and partially reconcile the difference in scope.

Recall that we established dominance of the PIL policy over the COP policy for exponential demand, by showing that the PIL policy can be obtained by performing a policy improvement step on the COP policy. For general demand, the $1-$step policy improvement $\pi^{r,+}$ on the constant order policy must satisfy $\pi^{r,+}(\xb_t)\in \argmin_{q\ge 0} \E[\mathcal{H}^r(J_{t+\tau-1}+q)\mid\xb_t]$, where $\mathcal{H}^r(\cdot)$ represents the bias associated with $\cop^r$. For exponentially distributed demand, the bias exists and takes the form of a parabola: As a consequence, $\pi^{r,+}(\xb_t)$ depends only on the expectation of $J_{t+\tau-1}$ and takes the simple form of the PIL policy. However, $\mathcal{H}^r(\cdot)$ will be some general convex function for other demand distributions (provided existence). As a consequence $\pi^{r,+}$ will depend on the precise distribution of $J_{t+\tau-1}$, i.e. it will not be a PIL policy.

However, in line with common approaches in approximate dynamic programming \citep{powell2007approximate}, one might consider to approximate the bias, and in view of the convexity of $\mathcal{H}^r(\cdot)$, it would be reasonable to fit a second-degree polynomial. Such an approximation would yield an \emph{approximate} $\pi^{r,+}$ that is a PIL policy; arguably, our numerical results indicate that such approximations tend to work well. 

\section{Conclusion}
\label{sec:Conclusion}
We introduced the projected inventory level  policy for the lost sales inventory system. We showed that this policy is asymptotically optimal in two regimes and has superior numerical performance. The policy may also be applied to the back-order system, in which case it is equivalent to a base-stock policy.

It needs to be explored whether projected inventory level policies can be fruitfully used for other complicated inventory systems where the whole pipeline is relevant for ordering decisions. Such systems include perishable inventory systems \citep[c.f.][]{bu2020asymptotic}, dual sourcing systems \citep[e.g.][]{xingoldberg2018TBS}, and systems with independent (overtaking) lead-times \citep[e.g.][]{StolyarWang2019}.

\ACKNOWLEDGMENT{\footnotesize The authors are grateful to the department editor, the associate editor, and all the anonymous referees for a thoughtful and constructive review process. The authors thank an anonymous referee for providing the key idea underlying the proof of Lemma~\ref{thm:coststructure}, and Ton de Kok, Ivo Adan and Geert-Jan van Houtum for stimulating discussions.}


\bibliographystyle{ormsv080} 
\bibliography{References} 


\ECSwitch


\ECHead{Technical proofs for companion}

\section{Proof of Lemma \ref{lemma:Attainment}}
\label{sec:ProofAttainment}

\begin{proof}{Proof.}
Suppose actions follow a PIL policy $\pil^U_t(\xb_t)$ for given $U\ge 0$. Suppose the state $\xb_0$ of period 0 satisfies $\E[J_{\tau-1}|\xb_0]\leq U$. Denote by $q_\tau\ge 0 $ the non-negative order placed in period 0 that attains $\E[I_{\tau}|\xb_0]=\E[J_{\tau-1}+q_\tau|\xb_0] =U$. 

Conditional on $D_0$ being $d_0$, the projected inventory level as seen in period 1 before the order $q_{\tau+1}$ is placed is given by $\E[J_{\tau} \mid D_0=d_0]$. It is sufficient to show that 
\begin{equation}
\label{eq:attainstep0}
\forall d_0\ge 0:\E[J_{\tau} \mid \xb_0,D_0=d_0]\leq U,
\end{equation}
since this implies that for period 1 it is always possible to place a non-negative order $q_{\tau+1}$ to attain the projected inventory level $U$. (Periods $2,3,\ldots$ then follow by induction.) Since $J_{\tau}$ is decreasing in $D_0$ for all $D_1,\ldots,D_\tau$ we find $\E[J_{\tau} \mid D_0=d_0]\le \E[J_{\tau} \mid D_0=0]$, and hence to show \eqref{eq:attainstep0} it suffices to demonstrate that $\E[J_{\tau} \mid D_0=0]\leq \E[I_{\tau}]= U$, which will be the subject of the remainder of this proof.

Define the random variable $G(y)$ such that $\P(G(y)\leq g)=\P(I_{\tau}\leq g \mid \xb_t, J_0 = y)$, or equivalently $G(y)= ((\ldots((y+q_1-D_1)^+ + q_2-D_2)^+\ldots)^+ + q_{\tau-1}-D_{\tau-1})^++q_\tau$. Observe that we have
$\E[I_\tau|\xb_0]=\E[G((I_0-D)^+)]$ and $\E[J_\tau\mid D_0 = 0]=\E[(G(I_0)-D)^+]$. Note that $0\leq dG(y)/dy \leq 1$ and $G(y)\geq 0$ so that
\begin{equation}
\label{eq:attainstep1}
(G(I_0)-D)^+ \leq G((I_0-D)^+).
\end{equation}
It follows from \eqref{eq:attainstep1} that $\E[J_\tau \mid D_0=0]=\E[(G(I_0)-D)^+]\leq \E[G((I_0-D)^+)]=\E[I_\tau]$. \Halmos

\end{proof}

\section{Proof of Lemma \ref{lem:bias}}
\label{sec:ProofBias}

\begin{proof}{Proof.}
First note that the inventory at the end of a period under $\cop^r$
satisfies the Lindley recursion $J_{t+1}=(J_t+r-D_t)^+$ so that
$\lim_{t\to\infty}\E[J_t]$ can be determined with the
Pollaczek Khinchine equation for the mean waiting time in an 
$M/D/1$ queue. Therefore we can express $g^r$ in closed form as
\begin{equation}
\label{eq:gr}
g^r = p(\mu-r) + h \frac{r^2}{2(\mu-r)}.
\end{equation}
We can now directly verify that inserting \eqref{eq:biasexplicit} and \eqref{eq:gr}
into the right hand side of \eqref{eq:biasdef} again
equals \eqref{eq:biasexplicit} by using integration by parts and tedious but otherwise straightforward algebra:
\begin{eqnarray}
& &\E_{D}\big[ h(x-D)^+ + p (D-x)^+ + \mathcal{H}^r((x-D)^++r)\big]- g^r \notag\\
& = & p\E\left[(D-x)^+\right] + h \E\left[(x-D)^+\right] + \int_0^x \frac{\mathcal{H}^r(x-y+r)}{\mu}e^{-y/\mu} \diff y + \mathcal{H}^r(r) e^{-x/\mu} - g^r \notag \\
& = & p\mu e^{-x/\mu} + h(x-\mu+\mu e^{-x/\mu}) + e^{-x/\mu}x\left(\frac{h(3r^2+3rx+x^2)}{6\mu(\mu-r)} + 
\frac{pr(2r+x)}{2\mu(\mu-r)} - \frac{p/(2r+x)}{2(\mu-r)}\right) \\
& & +  e^{-x/\mu}  \left(\frac{hr^2}{2(\mu-r)}-pr^2\right)-  p(\mu-r) -h \frac{r^2}{2(\mu-r)} \notag\\
& = & \frac{h}{2(\mu-r)}x^2-px = \mathcal{H}^r(x).
\end{eqnarray}
Clearly $\mathcal{H}^r(x)=\frac{h}{2(\mu-r)}x^2-px$ also satisfies $\mathcal{H}^r(0)=0$.
\Halmos
\end{proof}
\section{Proof of Lemma \ref{lem:Martingaleexpression}}
\label{sec:ProofMartingaleexpression}

\begin{proof}{Proof.}
We prove the following statement by induction on $t_2$, starting at $t_2=t_1$:
\begin{align}
\mathcal{H}^r(I_{t_1})= \E_{D_{t_1},\ldots,D_{t_2}}\left[ c[t_1,t_2](\cop^r) +\mathcal{H}^r(I_{t_2+1}) \middle| I_{t_1}\right]-(t_2+1-t_1)g^r. 
\label{eq:inductionHypothesis}
\end{align}
The base case $t_2=t_1$ holds by \eqref{eq:biasdef}, the definition of $c_{t}$, and $I_{t_1+1}=(I_{t_1}-D_{t_1})+r$. Now for the inductive step, assume that the statement holds for some $t_2\geq t_1$; we will show it holds also for $t_2+1$. Use \eqref{eq:biasdef} to conclude that 
\begin{align}
 \E_{D_{t_1},\ldots,D_{t_2}}\left[ \mathcal{H}^r(I_{t_2+1}) \mid I_{t_1}\right] &= \E_{D_{t_1},\ldots,D_{t_2}}\left[\E_{D_{t_2+1}}[ c_{t_2 +1} + \mathcal{H}^r((I_{t_2+1}-D_{t_2+1})^+ +r) -g^r  | I_{t_2 +1}] \middle| I_{t_1}\right] \notag\\
&=  \E_{D_{t_1},\ldots,D_{t_2}}\left[\E_{D_{t_2+1}}\left[ c_{t_2+1} + \mathcal{H}^r(I_{t_2+2})   \mid I_{t_2+1}\right] \middle| I_{t_1}\right]-g^r\notag\\
\label{eq:onestepbias}
&=  \E_{D_{t_1},\ldots,D_{t_2},D_{t_2+1}}\left[ c_{t_2+1} + \mathcal{H}^r(I_{t_2+2})    \mid I_{t_1} \right]- g^r.
\end{align}
Now substitution of \eqref{eq:onestepbias} back into  the induction hypothesis \eqref{eq:inductionHypothesis} yields:
\begin{align}
\mathcal{H}^r(I_{t_1})&=\E_{D_{t_1},\ldots,D_{t_2}}\left[ c[t_1,t_2](\cop^r) +\mathcal{H}^r(I_{t_2+1}) \mid I_{t_1}\right]-(t_2+1-t_1)g^r \notag\\
&=  \E_{D_{t_1},\ldots,D_{t_2+1}}\left[ c[t_1,t_2+1](\cop^r) +\mathcal{H}^r(I_{t_2+2}) \mid I_{t_1}\right]-(t_2+2-t_1)g^r. \label{eq:martingale} 
\end{align}
By induction, this shows that \eqref{eq:inductionHypothesis} holds for all $t_2\geq t_1$.  \Halmos
\end{proof}

\section{Proof of Lemma \ref{lem:IvosLemma}}
\label{sec:ivosproof}

\begin{proof}{Proof.}
We have
\begin{align}
\label{eq:idlem1}
\E[(X+Y)^+] = \int_{x+y\geq 0} (x+y) dF(x,y)= \int_{x=-\infty}^\infty \int_{y=-x}^\infty (x+y) dF(x,y).
\end{align}
For $x+y\geq 0$ we can write
\begin{equation}
\label{eq:idlem2}
x+y = \int_{z=-y}^x dz.
\end{equation}
Substitution of \eqref{eq:idlem2} into \eqref{eq:idlem1} yields
\begin{align}
\E[(X+Y)^+] &= \int_{x=-\infty}^\infty \int_{y=-x}^\infty \int_{z=-y}^x dz dF(x,y)
= \int_{-y\leq z \leq x} dF(x,y)dz \notag\\
&= \int_{z=-\infty}^\infty \int_{x=z}^\infty \int_{y=-z}^\infty dF(x,y) dz
= \int_{z=-\infty}^\infty \P(X\geq z, Y\geq -z) dz. \mbox{\Halmos}\notag
\end{align}
\end{proof}

  \section{Proof of Lemma~\ref{thm:coststructure}}\label{sec:coststructure}
  \begin{proof}{Proof.}
We first derive an identity for $q[1,T+\tau](P^U)|D_0,\ldots,D_{T-1}$. From (\ref{eq:dynamics1}-\ref{eq:dynamics3}), we find $J_t-L_t=I_t-D_t=J_{t-1}+q_t-D_t$, which implies $J_t-J_{t-1}=L_t+q_t-D_t$. Summing this identity for $t=1,\ldots,T+\tau-1$ yields: $\sum_{t=1}^{T+\tau-1}J_t-J_{t-1}=J_{T+\tau-1}-J_0=L[1,T+\tau-1]+q[1,T+\tau-1]-D[1,T+\tau-1]$. Since $J_0=L_0+I_0-D_0$ by (\ref{eq:dynamics1}-\ref{eq:dynamics2}), we obtain 
\begin{align}
J_{T+\tau-1}=I_0+L[0,T+\tau-1]+q[1,T+\tau-1]-D[0,T+\tau-1]\label{eq:identityforJ}
\end{align} 
Next, note that $\E(J_{T+\tau-1}(\pil^{U})|D_0,\ldots,D_{T-1})=\E(J_{T+\tau-1}(\pil^{U})|\xb_T)$, since $\xb_T$ is known given $D_t,\ldots,D_{T-1}$. Thus, Lemma~\ref{lemma:Attainment} and \eqref{eq:PUt} imply $\E(J_{T+\tau-1}(\pil^{U})|D_0,\ldots,D_{T-1})=U-q_{T+\tau}(\pil^{U})|D_0,\ldots,D_{T-1}$. Taking the conditional expectation on both sides of \eqref{eq:identityforJ} and plugging in this latter identity yields $U-q_{T+\tau}(\pil^{U})|D_0,\ldots,D_{T-1}=I_0+\E(L[0,T+\tau-1]+q[1,T+\tau-1]-D[0,T+\tau-1]|D_0,\ldots,D_{T-1})$. Using $\E(D[0,T+\tau-1]|D_0,\ldots,D_{T-1})=D[0,T-1]+\tau\mu$ and rearranging terms yields:
\begin{align}
&q[1,T+\tau](P^U)|D_0,\ldots,D_{T-1}\nonumber\\&=U-I_0-D[0,T-1]-\tau\mu-\E[L[1,T+\tau-1](P^U)|D_0,\ldots,D_{T-1}]\label{eq:induc_conv_Q}
\end{align} 
We are now ready to prove Properties 1 and 2 by induction. As induction hypothesis, let $T\in \N$ and suppose that $\forall t< T$ and for all demand sequences $D_0,D_1,\ldots$ the random variables $q[1,t+\tau](\pil^U) | D_0,\ldots,D_{t-1}$ are non-decreasing and concave, while the random variables $L[1,t+\tau](\pil^U) | D_0,\ldots,D_{t+\tau}$ are non-increasing and convex. We will prove that these random variables are also non-decreasing and concave (resp. non-increasing and convex) for $t=T$. 

Since $L[1,T+\tau-1](P^U)|D_0,\ldots,D_{T+\tau-1}$ is non-increasing and convex in $U$ by induction hypothesis, and since these properties are preserved in expectation, we find that $\E[(L[1,T+\tau-1](P^U)|D_0,\ldots,D_{T+\tau-1})|D_0,\ldots,D_{T-1}]=\E[L[1,T+\tau-1](P^U)|D_0,\ldots,D_{T-1}]$ is non-increasing and convex in $U$ for every realization $D_0,\ldots,D_{T-1}$. By \eqref{eq:induc_conv_Q} and since $U$ is non-decreasing and concave in $U$, this implies that $q[1,T+\tau](P^U)|D_0,\ldots,D_{T-1}$ is non-decreasing and concave in $U$. Next, specializing \eqref{eq:Lsum} to $P^U$, we find:
\begin{align}
L[0,T+\tau](P^U) & = \max_{k\in \{0,\ldots,T+\tau\}}\big(D[0,k]-I_0-q[1,k](P^U)\big)^+, \label{eq:induc_conv_L} 
\end{align}
Since $L[0,T+\tau](P^U)$ is the maximum of a number of functions, that by induction hypothesis and by the result we just proved are decreasing and convex in $U$, $L[0,T+\tau](P^U)$ must also be decreasing and convex in $U$. This proves the inductive step. 

Now note that our induction hypothesis holds for $T=1$ since in that case the functions are independent of $U$, and hence non-increasing and convex (non-decreasing and concave). This completes our proof by induction of Properties 1 and 2. For Property 3, note that it follows directly from Property 2. \Halmos
\end{proof}

\section{Explicit recursions for inventory projection}
\label{sec:recursions}

All probabilities below will be conditional on the state at time 0, $\xb_0$. We omit this condition as it makes the derivations more readable. 
From equations \eqref{eq:dynamics1}-\eqref{eq:dynamics3}, we obtain the following recursive expressions for any $t\in\{0,\ldots,\tau\}$:
\begin{align}
    \P\left(\tilde{J}_{t}=x\right) &= \sum_{k=0}^{\infty} \P\left((\tilde{I}_t-K_t)^+=x\mid K_t = k\right) \P(K_t=k) = \sum_{k=0}^x \P\left(\tilde{I}_t=x-k\right)\theta_k \\
    \P\left(\tilde{L}_t=x\right) &= \sum_{k=0}^{\infty} \P\left((K_t-\tilde{I}_t)^+=x\mid K_t = k\right) \P(K_t=k) = \sum_{k=x+1}^{\infty} \P\left(\tilde{I}_t=k-x\right) \theta_k\\
    \P\left(\tilde{I}_{t}=x\right) &= \P\left(\tilde{J}_{t-1}+Q_{t}=x\right) = \sum_{y=0}^\infty \P\left(\tilde{J}_{t-1}=x-y\right)\P(Q_{t}=y),
\end{align}
These recursions can be applied running from $t=0$ till $t=\tau$ as $\tilde{I}_0$ and $Q_t$, $t\in\{1,\ldots,\tau-1\}$ have Poisson distributions with means $\lambda I_0$ and $\lambda q_t$ conditional on $\xb_0=(I_0,q_1,\ldots,q_{\tau-1})$ when demand has a ME distribution with scale $\lambda$:
\begin{align}
    \P\left(\tilde{I}_0 = x \right) = e^{-\lambda I_0}\frac{(\lambda I_0)^x}{x!},\qquad
    \P\left( Q_t = x \right) = e^{-\lambda q_t}\frac{(\lambda q_t)^x}{x!}.
\end{align}

\section{Two moment fits for mixed Erlang distributions}
\label{sec:twomomentfit}

This Section provides the two moment fits that can also be found in \cite{tijms2003} or \cite{vanhoutum2006}. Suppose that $D:=\sum_{i=1}^K E_i$ where $\{E_i\}_{i=1}^\infty$ is a sequence of i.i.d. exponential random variables with rate $\lambda$ and that $K$ is an independent discrete random variable on the integers with mass function $\theta_k=\P(K=k)$. 
When the mean $\mu=\E[D]$ of demand and the squared coefficient of variation $c^2_{D}=\Var[D]/\mu^2$ are known, the following two point distribution for $K_t$ and choice for $\lambda$  will match these moments when $c^2_D\leq 1$:
\[
\theta_k = \begin{cases}
p &\text{if $k=k_1$}\\[-12pt]
1-p &\text{if $k=k_2$}\\[-12pt]
0 & \text{otherwise}
\end{cases},
\quad \lambda = \frac{k_1 - p}{\mu},
\]
with
\[
k_1:=\left\lfloor \frac{1}{c^2_D} \right\rfloor,\quad 
k_2=k_1+1,\quad 
p = \frac{k_1 c_D^2-\sqrt{k_1(1+c_D^2)-k_1^2 c_D^2}}{1+c_D^2}.
\]
When $c^2_D>1$, then an appropriate parameterization of a mixed Erlang distribution is
\[
\theta_k = \begin{cases}
p &\text{if $k=k_1$}\\[-12pt]
1-p &\text{if $k=k_2$}\\[-12pt]
0 & \text{otherwise}
\end{cases},\quad
\lambda = \frac{p+k_2 (1-p)}{\mu},
\]
with
\[
k_1=1,\quad 
k_2=\max\left(3,\left\lceil\frac{4c_D^2+\sqrt{16 (c_D^2-1)}}{2}\right\rceil \right), \quad
p = \frac{k_1 c_D^2-\sqrt{k_1(1+c_D^2)-k_1^2 c_D^2}}{1+c_D^2}.
\]

%
%
%




\end{document}